\renewcommand\theta{\vartheta}
\numberwithin{equation}{section}
\newcommand\MC{{ \mathcal M}}
\newtheorem{Lem}{Lemma}[section]
\newtheorem{Prop}[Lem]{Proposition}
\newtheorem{Cor}[Lem]{Corollary}
\newtheorem{Thm}[Lem]{Theorem}
\newtheorem{DefLem}[Lem]{Definition and Lemma}
\theoremstyle{definition}
\newtheorem{Def}[Lem]{Definition}
\theoremstyle{remark}
\newtheorem{Rem}[Lem]{Remark}
\newtheorem{Expl}[Lem]{Example}
\newcommand\coend{\operatorname{coend}}
\newcommand\Nat{\operatorname{Nat}}
\newcommand\ot{\otimes}
\newcommand\ou[1]{\otimes_{ {#1}}}
\DeclareMathOperator\id{\operatorname{id}}
\newcommand\ev{\operatorname{ev}}
\newcommand\db{\operatorname{db}}
\newcommand{\leer}{\operatorname{--}}
\newcommand\sw[1]{{}_{(#1)}}
\newcommand\swe[1]{ {}_{[#1]}}
\newcommand\ol{\overline}
\newcommand\inv{^{-1}}
\newcommand\LMod[1]{{_{#1}\mathfrak M}}
\newcommand\RComod[1]{{\mathfrak M^{#1}}}
\newcommand\RComodf[1]{{\mathfrak M_{\operatorname{\sf fd}}^{#1}}}
\renewcommand\epsilon\varepsilon
\newcommand\C{\mathcal C}
\newcommand\B{\mathcal B}
\newcommand\ctr{\operatorname{\mathcal Z}}
\def\BHM#1.#2.#3.#4.{{^{#1}_{#3}\mathcal B^{#2}_{#4}}}
 \newcommand\coad{ {\operatorname{coad}}}
 \newcommand\grbr{
	\grcalca = \grcolumn
	\grcalcb = \grrow
	\multiply \grcalca by \factor
	\advance \grcalca by \hfactor
	\multiply \grcalcb by \factor
	\grcalcc = \grcalcb
	\advance \grcalcc by -\hfactor
	\grcalcd = \grcalca
	\advance \grcalcd by \factor
	\put(\grcalcd,\grcalcb){\line(0,-1){\factor}}
	\put(\grcalcd,\grcalcc){\circle*{\qfactor}}
	\qbezier(\grcalca,\grcalcb)(\grcalca,\grcalcc)(\grcalcd,\grcalcc)
	\advance \grcalcb by -\factor
	\qbezier(\grcalca,\grcalcb)(\grcalca,\grcalcc)(\grcalcd,\grcalcc)
	\advance\grcolumn by 2
}
\newcommand\refc{\mathcal E}
 \title{Reflective Centers as Categories of Modules}
 \author{Peter Schauenburg}
 \address{Université Bourgogne Europe, CNRS, IMB UMR 5584, 21000 Dijon, France}
  \email{peter.schauenburg@ube.fr}
 \subjclass{18M15, 16T05}
 \newcommand\oH{ {\mathbb H}}
\begin{document}
 \thanks{The author thanks Chelsea Walton for her comments on the first version which helped improve the second.}

 \maketitle
 \begin{abstract}
	In \cite{laugwitzwaltonyakimov} the authors construct the reflective center of a module category $\MC$ over a braided monoidal category $\B$. The reflective center is by construction a braided module category over $\B$. In the case where $\B$ is the category of modules over a finite dimensional quasitriangular Hopf algebra $H$, acting on the category of modules over a comodule algebra, they construct a comodule algebra, the reflective algebra, whose modules are precisely the reflective center. In the construction, Majid's transmutation of $H$ plays a crucial rôle. 

 This note centers on the transmuted $H$, seeking to ``explain'' its appearance through a generalization in which the acting category is no longer a module category, but admits an internal reconstructed Hopf algebra; the transmutation is a special case of this notion. As a result, in certain cases, the reflective center is simply the category of modules in $\MC$ over that Hopf algebra in $\B$.
\end{abstract}
\section{Introduction}

In \cite{laugwitzwaltonyakimov}, the authors construct a class of braided module categories over a braided monoidal category $\B$. Namely, given a module category $\MC$ over $\B$ they construct, by a sort of analog of the Drinfeld double construction, the reflective center $\refc_\B(\MC)$, which is a braided module category over $\B$. The term ``reflective center'' is reflective of the relation of braided module categories to the quantum reflection equation, parallel to the relation of braided monoidal categories to the quantum Yang-Baxter equation; we will not discuss this background and motivation at all but rather refer the reader to \cite{laugwitzwaltonyakimov} for a comprehensive review and an equally comprehensive bibliography.

As a next step, the authors of \cite{laugwitzwaltonyakimov} consider the case where $\B$ is the category of $H$-modules for a quasitriangular Hopf algebra, and $\MC$ is the category of modules over an $H$-comodule algebra $A$, the categorical action coming from the comodule algebra structure. They identify
\begin{equation}\label{LWY-DH}
 \refc_{\LMod H}(\LMod A)\cong{^{\hat H}_A{\text{\sf DH}(H)}}
\end{equation}
where the right hand side is a category of Doi-Hopf modules, in which crucially a version $\hat H$ of Majid's transmutation of $H$ features. In the case where $H$ is finite dimensional they also consider an $H$-comodule algebra $R_H(A)$, the reflective center, such that
\begin{equation}\label{LWY-Alg}
	{^{\hat H}_A{\text{\sf DH}(H)}}\cong \LMod{R_H(A)}.
\end{equation}
The appearance of the transmuted Hopf algebra in this construction struck the present author as rather mysterious, and too good to be left as a beautiful coincidence.

The aim of this note is thus to make the appearance of the transmuted Hopf algebra less of a mysterious coincidence. We do this by generalizing the results to a setting where the acting category is no longer required to be associated to a $k$-Hopf algebra. What we require instead is that it admits an internally reconstructed Hopf algebra $H$ coacting universally on all the objects in $\B$. We also require some technical conditions which will be discussed in due course, but we can already remark that Majid's transmutation of a coquasitriangular Hopf algebra fulfils them for an action, for example, on the category of modules over a module algebra, and that exact module categories for finite tensor categories also fall under our setup.

In the simplest form our result then says
\begin{equation}
 \refc_\B(\MC)\cong{_H\MC}
\end{equation}
that is, in the correct setup the reflective center is the category of modules in $\MC$ over the internal Hopf algebra in $\B$. This can be viewed as a far reaching analog of \cref{LWY-DH}.

The simple form of the result relies crucially on a special structure of the internal Hopf algebra $H$, namely the structure of a commutative central braided Hopf algebra introduced in \cite{NeuSch:RBCNCB,Sch:CBHA}. This involves a special half-braiding $\sigma$ making $H$ an algebra in the center of $\B$

In the case where the category $\MC$ is described as a category of modules within $\B$ over an algebra in $\B$, our result allow us to describe the reflective center in the same way:
\begin{equation*}
	\refc_\B(\B_A)\cong\B_{A\ou\sigma H}
\end{equation*}
with the tensor product algebra formed with the half-braiding.

As an analog of \cref{LWY-Alg} we will discuss how to ``bosonize'' the module category $_H\MC$ in the case where $H$ is transmuted from an ordinary Hopf algebra whose comodule category acts on the module category over a module algebra.

Throughout the paper we make rather free use of any notions of tensor and module category theory, hoping the reader is already well versed or will find all they need in \cite{MR3242743,MR2119143}. An exception will be made for the notions around central braided bialgebras which we assume to be less widely known, so that we will recall their axioms.

We will also make free use of the usual graphical calculus. The action of $\B$ on $\MC$ will look no different from the tensor product in $\B$ except for the special feature that only one object of $\MC$ can occur, and must stay firmly fixed to the right margin. We use
\begin{equation*}
	\gbeg21\gmu\gnl\gend,\gbeg21\gcmu\gnl\gend,\gbeg21\glm\gnl\gend,\gbeg21\grcm\gnl\gend,\gbeg21\gbr\gnl\gend,\gbeg21\gbrc\gnl\gend
\end{equation*}
for the multiplication of an algebra, the comultiplication of a coalgebra, a left module structure, a right comodule structure, the braiding of a braided category, and the half-braiding in the Drinfeld center construction, respectively. For an algebra $A$ in a monoidal category $\C$, and a $\C$-module category $\MC$, we write $_A\MC$ for the category of modules over $A$ in the category $\MC$, and similarly $^C\MC$ for comodules in $\MC$ over a coalgebra $C$ in $\C$, while the notations $\LMod A$ and $\RComod C$ are reserved for the categories of modules over a $k$-algebra, resp.\ comodules over a $k$-coalgebra.

\section{Module braidings from commutative central braided Hopf algebras}
Throughout, we will consider a braided monoidal category $\B$ and a $\B$-module category $\MC$. We will consider both the monoidal associators and the module associators as strict, backed up by the usual coherence theorems. We denote the action bifunctor by $\triangleright$.

\begin{Def}\label{def:brm}
	Let $\B$ be a braided monoidal category. A braided left module category over $\B$ is a left $\B$-module category $(\MC,\triangleright)$ equipped with an isomorphism $e_{X,V}\colon X\triangleright V\to X\triangleright V$ (the module braiding) natural in $X\in\B$, $V\in \MC$ satisfying
	$$
	e_{X\ot Y,V}=
	\gbeg36
        \got 1X\got 1Y\got 1V\gnl
	\gibr\gcl1\gnl
	\gcl1\grbr\gnl
	\gbr\gcl1\gnl
	\gcl1\grbr\gnl
	\gob 1X\gob 1Y\gob 1V\gend
	\qquad\text{and}\qquad
	e_{X,Y\triangleright V}=
	\gbeg35
	\got 1X\got 1Y\got 1V\gnl
	\gbr\gcl1\gnl
	\gcl1\grbr\gnl
	\gbr\gcl1\gnl
	\gob 1X\gob 1Y\gob 1V\gend
$$
where $e_{X,V}=\gbeg 23\got 1X\got 1V\gnl\grbr\gnl\gob 1X\gob 1V\gend$.\end{Def}

Our aim is to describe certain braided module category structures, and notably the reflective center to be recalled below, by means of module structures in $\MC$ over certain Hopf algebras in $\B$. The first question that arises would be how $_H\MC$ is a $\B$-module category. This is not a problem in presence of a braiding, but to clarify what is going on we formulate this (surely well-known) fact in the situation where the acting category is not braided.
\begin{Rem}\label{modovercentralalg}
	Let $\C$ be a monoidal category, $\MC$ a $\C$-module category, and $B$ an algebra in the center of $\C$. Then $_B\MC$ is a $\C$-module category, with the $B$-module structure on $X\triangleright M$ for $X\in\B$  and $M\in{_B\MC}$ defined by
\begin{equation*}
	\gbeg34
	\got1B\got1X\got1M\gnl
	\gbrc\gcl1\gnl
	\gcl1\glm\gnl
	\gob1X\gvac1\gob1M\gend
\end{equation*}
and the underlying functor $_B\MC\to\MC$ a strict $\C$-module functor.
\end{Rem}
We will apply this to a Hopf algebra in the braided category $\B$ acting on $\MC$, but crucially we will neither use the braiding of $\B$ nor its inverse when considering the Hopf algebra as an algebra in the center of $\B$.

Rather, we now recall the notion of a central braided bialgebra and its ``good'' comodules from \cite{ Sch:CBHA}. If the reader does not want to refer back to \cite{ Sch:CBHA}, some of the axioms may seem unmotivated and strange. We will recall in the next section how such objects arise in fact very naturally.

A \emph{central coalgebra} in $\B$ is a coalgebra $H$ with a half-braiding (potentially different from the braiding in $\B$) making it  an object in the center of $\B$  in such a way that

  \begin{equation}\label{cctreq}
    \gbeg 35
    \got 2H\got 1X\gnl
    \gcmu\gcl 1\gnl
    \gcl 1\gbrc\gnl
    \gibr\gcl 1\gnl
    \gob 1X\gob 1H\gob 1H
    \gend
    =
    \gbeg 35
    \got 1H\got 2X\gnl
    \gbbrhc3214\gnl\gnl
    \gcl 1\gcmu\gnl
    \gob 1X\gob 1H\gob 1H
    \gend
    =
    \gbeg 35
    \got 2H\got 1X\gnl
    \gcmu\gcl 1\gnl
    \gcl 1\gbr\gnl
    \gbrc\gcl 1\gnl
    \gob 1X\gob 1H\gob 1H
    \gend
  \end{equation}
  holds for all $X\in\B$. Note that this requirement is quite different from the more obvious condition that $H$ be a coalgebra in the center.

  Central braided coalgebras form a monoidal category under the tensor product of coalgebras and of the center. A \emph{central braided bialgebra} is an algebra in the category of central braided coalgebras. Thus it is an ordinary bialgebra whose underlying coalgebra is a central braided coalgebra, and such that the multiplication is a morphism in the center. An $H$-comodule $X$ is called \emph{good} if
\begin{gather}
    \label{gcv1}
    \gbeg 34
    \got 1X\gvac1\got1Y\gnl
    \grcm\gcl1\gnl
    \gcl1\gbrc\gnl
    \gob1X\gob1Y\gob1H
    \gend
    =
    \gbeg 35
    \got1X\got1Y\gnl
    \gbr\gnl
    \gcl1\grcm\gnl
    \gbr\gcl1\gnl
    \gob1X\gob1Y\gob1H
    \gend
    =
    \gbeg 36
    \got1X\gvac1\got1Y\gnl
    \grcm\gcl1\gnl
    \gcl1\gbr\gnl
    \gbr\gcl2\gnl
    \gbr\gnl
    \gob1X\gob1Y\gob1H
    \gend
    \end{gather}
    for all $Y\in\B$. We denote the category of good comodules by $\B^{!H}$. It is a monoidal subcategory of the category $\B^H$ of all $H$-comodules. If $H$ is a commutative braided central bialgebra, that is, the multiplication is commutative with respect to the half-braiding, then the braiding in $\B$ between two comodules in $\B^{!H}$ is in fact an $H$-comodule morphism, and thus $\B^{!H}$ is braided with the braiding induced by that of $\B$.

    \begin{Prop}\label{modules to reflective center} 
	    Let $\B$ be a braided monoidal category, $\MC$ a $\B$-module category, and $H$ a commutative central braided Hopf algebra in $\B$. Then the category $_H\MC$ of left $H$-modules in $\MC$ is a braided left $\B^{!H}$-module category over the category of good $H$-comodules as follows:

	The $\B^{!H}$-module structure on $_H\MC$ is pulled back from the $\B$-module structure on $_H\MC$ from \Cref{modovercentralalg} using the half-braiding of $H$.

	The module braiding is given by 
	\begin{equation}\label{braidingfrommodule}
		e_{X,M}
	=\gbeg 34
	 \got 1X\gvac 1\got1M\gnl
	 \grcm\gcl1\gnl
	 \gcl1\glm\gnl
 \gob 1X\gvac1\gob1M\gend
 \end{equation}
	 for $X\in\B^{!H}$ and $M\in{_H\MC}$.
\end{Prop}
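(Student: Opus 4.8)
The plan is to take the underlying $\B^{!H}$-module category structure as given: it is the pullback along the (strong monoidal) forgetful functor $\B^{!H}\to\B$ of the $\B$-module category structure on ${_H\MC}$ from \Cref{modovercentralalg}, so nothing is to be checked there, and all the work lies in the module braiding. Writing the coaction of $X$ as $x\mapsto x_{(0)}\otimes x_{(1)}$ and the $H$-action on $M$ as a dot, the formula \eqref{braidingfrommodule} reads $x\triangleright m\mapsto x_{(0)}\triangleright(x_{(1)}\cdot m)$. The first step is to show this is a morphism of ${_H\MC}$. By \Cref{modovercentralalg} the $H$-action on $X\triangleright M$ brings the acting copy of $H$ past $X$ by the half-braiding $\sigma$ and lets it act on $M$; to see that $e_{X,M}$ commutes with this, I slide the acting copy of $H$ past $X$ and past the strand created by the coaction, using naturality of $\sigma$ (that $\rho_X$ is a morphism of $\B$) together with the self-half-braiding of $H$, and then reorder the two copies of $H$ on $M$ by commutativity of $H$ with respect to $\sigma$ and associativity of the action. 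As anticipated in the text, only the half-braiding of $H$, and never the braiding of $\B$, is used here.

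Naturality in $X\in\B^{!H}$ and in $M\in{_H\MC}$ is then immediate, since morphisms in these categories are comodule, resp.\ module, maps and hence commute with the coaction and the action appearing in \eqref{braidingfrommodule}. For invertibility I use the antipode $S$ of $H$: the candidate inverse is $x\triangleright m\mapsto x_{(0)}\triangleright(S(x_{(1)})\cdot m)$, and coassociativity together with the antipode axiom collapse either composite to the identity. Since $e_{X,M}$ is already known to be a morphism of ${_H\MC}$ and is invertible in $\MC$, its inverse is automatically $H$-linear.

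It remains to verify the two identities of \Cref{def:brm}. For $e_{X,Y\triangleright V}$ I unfold the $H$-action on $Y\triangleright V$ through the half-braiding and invoke the good-comodule axiom \eqref{gcv1}, whose content is precisely that a coaction followed by the half-braiding past $Y$ agrees with the braiding-conjugated coaction; re-collecting the crossings reproduces the braiding--module-braiding--braiding composite on the right. For $e_{X\otimes Y,V}$ I expand the comodule structure of the tensor product (coact on each factor, cross the two copies of $H$ with the braiding of $\B$, and multiply), split the action of the resulting product on $V$ by associativity, and apply \eqref{gcv1} to each factor to turn the crossings into the nested inverse-braiding, module braidings and braiding displayed on the right, with the central-coalgebra axioms \eqref{cctreq} and the commutativity of $H$ ensuring that the multiplication of the two copies of $H$ is compatible with the half-braiding used in the action. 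The main obstacle is this last identity: one must expand the braided tensor-product comodule and track all the crossings simultaneously, and although \eqref{gcv1} is tailored exactly to trade a coaction past a braiding, aligning it with the multiplication on $H$, the commutativity constraint, and the two separate occurrences of the module braiding is the technical heart of the argument.
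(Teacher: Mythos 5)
Most of your proposal coincides with the paper's proof: the module structure is taken as a pullback with nothing to check; $H$-linearity of $e_{X,M}$ is established exactly as you describe (naturality of the half-braiding against the coaction $\rho_X$, which produces the self-half-braiding $\sigma_{H,H}$, then braided commutativity $\mu\sigma_{H,H}=\mu$ and associativity of the action); invertibility is via the antipode, with inverse $x\triangleright m\mapsto x_{(0)}\triangleright(S(x_{(1)})\cdot m)$; and the second identity of \Cref{def:brm} is obtained, as in the paper, by unfolding the $H$-action on $Y\triangleright V$ through $\sigma_{H,Y}$ and applying the first equality of \eqref{gcv1}, which trades coaction-then-half-braiding for the braiding-conjugated coaction. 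You also correctly note (as the paper does implicitly) that goodness of the objects is only needed to recognize the braidings appearing in these computations as the braiding of the acting category $\B^{!H}$.

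Where you go astray is the identity for $e_{X\otimes Y,V}$, which you single out as the technical heart and propose to prove by applying \eqref{gcv1} to each factor, with \eqref{cctreq} and commutativity ``ensuring that the multiplication of the two copies of $H$ is compatible with the half-braiding used in the action.'' This mechanism does not match the computation: in $e_{X\otimes Y,V}$ the two copies of $H$ act directly on $M\in{_H\MC}$, so no half-braiding occurs anywhere in this identity --- $\sigma$ enters only when $H$ acts on objects of the form $Y\triangleright V$, i.e.\ in the $H$-linearity check and in the second axiom. Consequently \eqref{gcv1}, whose hypothesis pattern is a coaction followed by $\sigma$, has nothing to latch onto, and neither \eqref{cctreq} nor commutativity is involved. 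The paper's \eqref{firstaxfrommodule} shows this identity holds for arbitrary $X,Y\in\B^{H}$ using only the bialgebra structure: expand the braided tensor-product coaction, split the action of $\mu(x_{(1)}\otimes y_{(1)})$ by module associativity, and move the single crossing $c_{H,Y}$ created by the tensor coaction past the strands by naturality of the braiding of $\B$ alone. So the step you identify as hardest is in fact the cheap one, and as written your derivation of it would not assemble into a proof; the genuinely ``good''-comodule-dependent steps are the ones you already handled correctly.
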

\begin{proof}
	We need not comment on the module category structure of $_H\MC$.

	We note that the definition of $e_{XM}$ can be given for $X\in\B^H$ and $M\in {_H\MC}$ without using the good comodule condition or, for that matter, the central braided bialgebra structure.

		It is easy to check that $e_{X,M}$ is an isomorphism with inverse given by $e\inv_{X,M}=(X\ot\mu)(X\ot S\ot M)(\delta\ot M)$.

		We need to check that $e_{X,M}$ is an $H$-module morphism, which is true for any $X\in\B^H$ but uses braided commutativity of $H$:
		\begin{equation*}
			\gbeg47
			\got 1H\gvac1\got1X\got1M\gnl
			\gbbrhc3215\gcl2\gnl
			\gnl
			\grcm\glm\gnl
			\gcl2\gcn2113\gcl1\gnl
			\gvac2\glm\gnl
			\gob1X\gvac2\gob1M
			\gend
			=
			\gbeg48
			\got1H\got1X\gvac1\got1M\gnl
			\gcl1\grcm\gcl3\gnl
			\gbrc\gcl1\gnl
			\gcl4\gbrc\gnl
			\gvac1\gcl1\glm\gnl
			\gvac1\gcn2113\gcl1\gnl
			\gvac 2\glm\gnl
			\gob1X\gvac2\gob1M\gend
			=
			\gbeg48
			\got1H\got1X\gvac1\got1M\gnl
			\gcl1\grcm\gcl5\gnl
			\gbrc\gcl1\gnl
			\gcl4\gbrc\gnl
			\gvac1\gmu\gnl
			\gvac1\gcn2123\gnl
			\gvac2\glm\gnl
			\gob1X\gvac2\gob1M\gend
			=
			\gbeg47
			\got1H\got1X\gvac1\got1M\gnl
			\gcl1\grcm\gcl5\gnl
			\gbrc\gcl1\gnl
			\gcl3\gmu\gnl
			\gvac1\gcn2123\gnl
			\gvac2\glm\gnl
			\gob1X\gvac2\gob1M\gend
			=
			\gbeg47
			\got1H\got1X\gvac1\got1M\gnl
			\gcl2\grcm\gcl1\gnl
			\gvac1\gcl1\glm\gnl
			\gbrc\gvac1\gcl2\gnl
			\gcl2\gcn2113\gnl
			\gvac2\glm\gnl
			\gob1X\gvac2\gob1M
			\gend
   		\end{equation*}

	For $X,Y\in\B^H$ (and using only that $H$ is a bialgebra) we have
	\begin{equation}\label{firstaxfrommodule}
		\gbeg 55
		\gvac 1\got1{X\ot Y}\gvac 2\got 1M\gnl
		\gvac 1\grcm\gvac1\gcl1\gnl
		\gvac 1\gcl1\gcn2113\gcl1\gnl
		\gvac 1\gcl1\gvac1\glm\gnl
		\gvac 1\gob1{X\ot Y}\gvac 2\got1M\gend
		=
		\gbeg57
		\got 1X\gvac 1\got 1Y\gvac1\got 1M\gnl
		\grcm\grcm\gcl4\gnl
		\gcl4\gbr\gcl1\gnl
		\gvac 1\gcl3\gmu\gnl
		\gvac 2\gcn2123\gnl
		\gvac 3\glm\gnl
		\gob 1X\gob1Y\gvac2\gob1M
		\gend
		=
		\gbeg56
		\got 1X\gvac1\got1Y\gvac1\got1M\gnl
		\grcm\grcm\gcl1\gnl
		\gcl3\gbr\glm\gnl
		\gvac1\gcl2\gcn2113\gcl1\gnl
		\gvac 3\glm\gnl
		\gob1X\gob1Y\gvac2\gob1M
		\gend
		=
		\gbeg 46
		\got 1X\got1Y\gvac1\got1M\gnl
		\gcl1\grcm\gcl1\gnl
		\gbr\glm\gnl
		\gcl1\grcm\gcl1\gnl
		\gibr\glm\gnl
		\gob1X\gob1Y\gvac1\gob1M
		\gend
	\end{equation}
	This looks like the first axiom in the definition of a module braiding, and it is in fact this axiom if $X,Y\in\B^{!H}$ because then the braiding in the picture is also the braiding in the acting category.

	Next, for $X\in \B^{!H}$ and $Y\in\B$ we have
		\begin{equation*}
		\gbeg45
		\got1X\gvac2\got1{Y\triangleright M}\gnl
		\grcm\gvac1\gcl2\gnl
		\gcl2\gcn2113\gnl
		\gvac2\glm\gnl
		\gob1X\gvac2\gob1{Y\triangleright M}\gend
		=
		\gbeg45
		\got1X\gvac1\got1Y\got1M\gnl
		\grcm\gcl1\gcl2\gnl
		\gcl2\gbrc\gnl
		\gvac1\gcl1\glm\gnl
		\gob1X\gob1Y\gvac1\gob1M
		\gend
		=
		\gbeg45
		\got1X\got1Y\gvac1\got1M\gnl
		\gbr\gvac1\gcl2\gnl
		\gcl1\grcm\gnl
		\gbr\glm\gnl
		\gob1X\gob1Y\gvac1\gob1M
		\gend
	\end{equation*}
	which looks like the second axiom for a module braiding. In fact if $Y\in\B^{!H}$ it is this second axiom because then the braiding in the picture is the braiding in the acting category.
\end{proof}
\begin{Def}
	Let $\MC$ be a $\B$-module category over a braided monoidal category $\B$. The \emph{reflective center} of $\MC$ is the category $\refc_\B(\MC)$ whose objects are pairs $(V,e_{\leer,V})$ in which $V\in\MC$ and $e_{XV}\colon X\triangleright V\to X\triangleright V$ is an isomorphism natural in $X\in\B$ that satisfies $e_{IV}=\id_V$ as well as the first of the two equations in \Cref{def:brm}. Morphisms in $\refc_\B(\MC)$ are morphisms in $\MC$ commuting with all the $e$ morphisms in the obvious sense.
\end{Def}

In \cite{laugwitzwaltonyakimov} the authors show that $\refc_\B(\MC)$ is indeed a braided module category, where $Y\triangleright(V,e_{\leer,V})=(Y\triangleright V,e_{\leer,Y\triangleright V})$ with $e_{X,Y\triangleright V}$ \emph{defined} by the second diagram in \Cref{def:brm}.
	
\begin{Rem}\label{univprop}
	If $\MC$ is a monoidal category, braidings on $\MC$ correspond to monoidal functors $\MC\to\ctr(\MC)$ to the Drinfeld center that are sections of the underlying functor. An analogous fact holds for the reflective center. But because the acting category is actually not affected by the construction, the reflective center even has a more direct couniversal property. We formulate it a bit sloppily, a 2-categorical formulation taking isomorphisms between functors into account would be more appropriate.

	Let $\MC$ be a $\B$-module category and $\C$ a braided $\B$-module category. Then $\B$-module functors $\C\to \MC$ correspond to braided $\B$-module functors $\C\to\refc_\B(\MC)$. Namely, given a module functor $\mathcal F\colon\C\to\mathcal \MC$ with its coherent isomorphism $X\triangleright \mathcal F(M)\cong\mathcal F(X\triangleright M)$, the unique lifting $\mathcal G\colon\C\to\refc_\B(\MC)$ to a braided module functor $\mathcal G\colon \C\to\mathcal \refc_\B(\MC)$ is given by $\mathcal G(M)=(\mathcal F(M),e_{X,\mathcal F(M)})$ with $e_{X,\mathcal F(M)}=(X\triangleright \mathcal F(M)\cong\mathcal F(X\triangleright M)\xrightarrow{\mathcal F(e_{XM})}\mathcal F(X\triangleright M)\cong X\triangleright \mathcal F(M))$.
\end{Rem}
\begin{Cor}
	Let $H$ be a commutative central braided Hopf algebra in the braided monoidal category $\B$ and $\MC$ a $\B$-module category. Then the braided $\B^{!H}$-module structure of $_H\MC$ above gives rise to a $\B^{!H}$-module functor $_H\MC\to\refc_{\B^{!H}}(\MC)$.
\end{Cor}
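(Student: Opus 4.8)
The plan is to deduce the statement from the couniversal property of the reflective center recorded in \Cref{univprop}, applied with the braided monoidal category $\B^{!H}$ playing the rôle of $\B$. This is legitimate since $\B^{!H}$ is itself braided, with braiding induced from that of $\B$, as noted just before \Cref{modules to reflective center}.

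First I would fix the two $\B^{!H}$-module categories at hand. The target $\MC$ is made into a $\B^{!H}$-module category by restricting its given $\B$-module structure along the braided monoidal forgetful functor $\B^{!H}\to\B$; the source is $_H\MC$ with the braided $\B^{!H}$-module structure supplied by \Cref{modules to reflective center}, whose module braiding is \eqref{braidingfrommodule}. The essential input is then that the underlying functor $U\colon{_H\MC}\to\MC$ forgetting the $H$-module structure is a $\B^{!H}$-module functor. By \Cref{modovercentralalg}, applied with $\C=\B$ and $B=H$ viewed as an algebra in the center of $\B$ through its half-braiding, $U$ is a strict $\B$-module functor, and restricting along $\B^{!H}\to\B$ keeps it strict as a $\B^{!H}$-module functor.

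Applying \Cref{univprop} to the module functor $U$ then yields the desired braided $\B^{!H}$-module functor $_H\MC\to\refc_{\B^{!H}}(\MC)$. By the explicit formula for the lifting in \Cref{univprop}, and because $U$ is strict, this functor sends $M$ to the pair $(M,e_{\leer,M})$ whose reflecting isomorphisms are precisely the module braidings \eqref{braidingfrommodule}, so that the braided structure transported by \Cref{univprop} agrees with that of \Cref{modules to reflective center}. Since the argument is purely an invocation of an already-proved universal property, there is no genuine obstacle; the only points to keep straight are the restriction of the module structures along $\B^{!H}\to\B$ and the fact that the $\B$-action on $_H\MC$ uses the half-braiding of $H$ while that on $\MC$ does not, a discrepancy that evaporates once one passes to $U$, which is the identity on underlying objects.
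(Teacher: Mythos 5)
Your proposal is correct and follows exactly the route the paper intends: the Corollary is stated without proof precisely because it is the immediate combination of \Cref{modules to reflective center} (making $_H\MC$ a braided $\B^{!H}$-module category) with the couniversal property of \Cref{univprop}, applied to the forgetful functor $_H\MC\to\MC$, which is a strict module functor by \Cref{modovercentralalg}. Your additional care in noting that the restriction along $\B^{!H}\to\B$ and the strictness of the forgetful functor make the lifted reflecting isomorphisms coincide with \eqref{braidingfrommodule} is exactly the right bookkeeping.
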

\section{Strong coendomorphism objects}
Very roughly speaking Tannaka duality deals with reconstructing a Hopf algebra from a monoidal category and a monoidal functor to the category of vector spaces.
The idea to use a general braided category as a target category instead, and in particular to reconstruct a Hopf algebra within the category itself from its identity endofunctor goes back to \cite{MR1188817}.

The following definition makes sense if $\B$ is just monoidal but we will assume $\B$ braided throughout.
\begin{Def}
	Let $\mathcal X$ be a category and $\omega\colon\mathcal X\to\B$ a functor. A \emph{coendomorphism object} of $\omega$ in $\B$ is an object $\coend(\omega)\in\B$ with a natural transformation $\delta\colon \omega\to\omega\ot \coend(\omega)$ which is universal in that the natural transformation
	\begin{align*} 
		\B(\coend(\omega),T)&\to\Nat(\omega,\omega\ot T)\\
		f&\mapsto (\omega\ot f)\delta
	\end{align*}
	is an isomorphism for all $T\in\B$.

	A coendomorphism object of $\B$ is a coendomorphism object of the identity endofunctor of $\B$.
\end{Def}
It follows from the definition that $\coend(\omega)$ is a coalgebra in $\B$ in such a way that the components of $\delta$ are comodule structures.

It is (pardon the swipe) well-known but false that if $\mathcal X$ is monoidal, $\omega$ is a monoidal functor, and $\coend(\omega)$ is a coendomorphism object, then $\coend(\omega)$ is a bialgebra. It is true in many interesting cases however, and the following stronger definition was proposed in \cite{ Sch:CBHA} as a unifying way to capture  frequently encountered conditions that ``make it work'':
\begin{Def}
	A coendomorphism object of $\omega\colon\mathcal X\to \B$ is \emph{strong} if
	\begin{align*}
		\B(\coend(\omega)\ot P,T)&\to\Nat(\omega\ot P,\omega\ot T)\\
	f&\mapsto (\omega\ot f)(\delta\ot P)
\end{align*}
is a bijection for all $P,T\in\B$.
\end{Def}

In light of our present interests we could advance the following definition, which recovers the preceding one by considering the $\B$-module category $\mathcal B$.
\begin{Def}
	Let $\MC$ be a $\B$-module category. A coendomorphism object of $\omega\colon\mathcal X\to \B$ is $\MC$-\emph{strong} if
	\begin{align*}
		\MC(\coend(\omega)\triangleright P,T)&\to\Nat(\omega\triangleright P,\omega\triangleright T)
	\end{align*}
	is a bijection for all $P,T\in\MC$.
\end{Def}
The following consequence of $\MC$-strongness is what, in the case $\MC=\B$, makes it true that the strong coendomorphism object of a monoidal functor is a bialgebra. Namely, the way in which the definition ``fixes'' the technical problem alluded to above is that if two functors to $\B$ admit strong coendomorphism objects, so does their tensor product (defined on the product of the source categories). In this way the use of ``strong'' replaces the assumption made for example in \cite{MR1381692} that coendomorphism objects are compatible in this way with tensor products.
\begin{Lem}
	Let $\omega\colon \mathcal X\to \B$ and $\nu\colon\mathcal Y\to \B$ admit strong $\MC$-strong coendomorphism objects $(\coend(\omega),\delta)$, $(\coend(\nu),\delta')$ Then $\omega\ot\nu\colon\mathcal X\times\mathcal Y\to\B$ admits the strong and $\MC$-strong coendomorphism object
	\begin{equation*}
		\omega\ot\nu\xrightarrow{\delta\ot\delta'}\omega\ot\coend(\omega)\ot\nu\ot\coend(\nu)\xrightarrow{\id\ot c\ot\id}\omega\ot\nu\ot\coend(\omega)\ot\coend(\omega),
	\end{equation*}
	where $c$ is the braiding of $\B$.
\end{Lem}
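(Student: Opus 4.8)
The plan is to prove a single statement that covers both assertions at once: for an \emph{arbitrary} $\B$-module category $\mathcal N$, if $\omega$ and $\nu$ are both $\mathcal N$-strong, then the displayed object and structure transformation exhibit $\omega\ot\nu$ as an $\mathcal N$-strong coendomorphism object. Since the plain ``strong'' condition is precisely the $\mathcal N$-strong condition for $\mathcal N=\B$ acting on itself by $\ot$, specializing to $\mathcal N=\B$ and to $\mathcal N=\MC$ then yields the two claims simultaneously. Write $C=\coend(\omega)$ and $D=\coend(\nu)$, and fix $P,T\in\mathcal N$. I must show that the assignment $f\mapsto((\omega\ot\nu)\triangleright f)(\Delta\triangleright P)$, with $\Delta=(\id\ot c\ot\id)(\delta\ot\delta')$, is a bijection from $\mathcal N((C\ot D)\triangleright P,T)$ onto $\Nat((\omega\ot\nu)\triangleright P,(\omega\ot\nu)\triangleright T)$.

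The first ingredient is the standard fact that a transformation between functors on a product $\mathcal X\times\mathcal Y$ is the same datum as a family of components natural in $X$ for each fixed $Y$ and natural in $Y$ for each fixed $X$; this is what lets me bring the two universal properties to bear one variable at a time. The second ingredient is a factorization of the map in question. Using strictness of the module action and the fact that $\delta_X$ touches only the left tensor factor, I would rewrite $\Delta_{X,Y}\triangleright P$ so that $\delta_X$ is pulled to the outside, obtaining components of the form $(\omega(X)\triangleright G_Y)(\delta_X\triangleright(\nu(Y)\triangleright P))$, where $G_Y\colon (C\ot\nu(Y))\triangleright P\to\nu(Y)\triangleright T$ no longer depends on $X$. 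This is exactly the forward map of the $\mathcal N$-strong universal property of $\omega$, applied for each fixed $Y$ with source object $\nu(Y)\triangleright P$ and target $\nu(Y)\triangleright T$. Because that bijection is implemented by composition with the fixed transformation $\delta$, it is compatible with the morphisms of $\mathcal Y$, hence restricts to a bijection between families $G_Y$ natural in $Y$ and transformations natural in $(X,Y)$. Thus the original map is a bijection if and only if $f\mapsto(G_Y)_Y$ is a bijection onto the natural-in-$Y$ families.

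It then remains to identify $f\mapsto(G_Y)_Y$ with the $\mathcal N$-strong universal property of $\nu$. Transporting along the braiding isomorphism $(C\ot\nu(Y))\triangleright P\cong\nu(Y)\triangleright(C\triangleright P)$ induced by $c_{C,\nu(Y)}$, I would check that $G_Y$ corresponds to $(\nu(Y)\triangleright\hat f)(\delta'_Y\triangleright(C\triangleright P))$, the forward $\nu$-map of $\hat f=f\circ(c^{-1}_{C,D}\triangleright P)\colon (D\ot C)\triangleright P\to T$. Since $f\mapsto\hat f$ is a bijection $\mathcal N((C\ot D)\triangleright P,T)\to\mathcal N((D\ot C)\triangleright P,T)$ (invert the crossing), and $\hat f\mapsto(G'_Y)_Y$ is a bijection by $\mathcal N$-strongness of $\nu$, the composite $f\mapsto(G_Y)_Y$ is a bijection, which completes the argument.

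The one genuinely computational point, which I expect to be the main obstacle, is the identification in the last paragraph: rewriting $G_Y$ through the braiding requires naturality of $c$ in each argument together with one hexagon identity to collapse the instances $c_{C,\nu(Y)}$ and $c_{C,\nu(Y)\ot D}$ into the single crossing $c_{C,D}$ on the coend factors. This is precisely where the braiding $c$ in $\Delta$ earns its place, and where one sees that the correct coendomorphism object is $\coend(\omega)\ot\coend(\nu)$, the order being fixed by that crossing. Everything else is the formal two-step use of the universal properties and the reduction of product-naturality to separate naturality; the coalgebra structure on $C\ot D$ with $\Delta$ a comodule structure is then inherited automatically from the general remark that any coendomorphism object is a coalgebra in this way.
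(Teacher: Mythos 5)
Your proof is correct and takes essentially the same route as the paper's: apply the strongness of $\omega$ in the variable $X$, move the leftover coendomorphism factor across the braiding, then apply the strongness of $\nu$ in $Y$, reducing naturality over $\mathcal X\times\mathcal Y$ to naturality in each variable separately (and treating ``strong'' as the case $\MC=\B$). The difference is only presentational: the paper phrases this as a chain of bijections of $\Hom$-/$\Nat$-sets, leaving implicit both the compatibility of each bijection with naturality in the spectator variable and the hexagon-plus-naturality computation identifying the composite bijection with composition by the displayed transformation $(\id\ot c\ot\id)(\delta\ot\delta')$, both of which you make explicit.
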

\begin{proof}We use the notation $\Nat_X(\omega(X),\nu(X)):=\Nat(\omega,\nu)$ if we feel the need to refer to the object ($X$) that a transformation between two functors is natural in. For bifunctors and transformations natural in two objects we use $\Nat_{X,Y}$. Now we have
	\begin{align*}
		\MC((\coend(\omega)\ot\coend(\nu))\triangleright P,T)
		&\cong\MC(\coend(\omega)\triangleright\coend(\nu)\triangleright P,T)\\
		&=\Nat_X(\omega(X)\triangleright \coend(\nu)\triangleright P,\omega(X)\triangleright T)\\
		&=\Nat_X((\omega(X)\ot\coend(\nu))\triangleright P,\omega(X)\triangleright T)\\
		&=\Nat_X((\coend(\nu)\ot\omega(X))\triangleright P,\omega(X))\triangleright T)\\
		&=\Nat_X(\coend(\nu)\triangleright\omega(X)\triangleright P,\omega(X)\triangleright T)\\
		&=\Nat_{X,Y}(\nu(Y)\triangleright\omega(X)\triangleright P,\nu(Y)\triangleright \omega(X)\triangleright T)\\
		&=\Nat((\omega\ot\nu)\triangleright P,(\omega\ot\nu)\triangleright T).
	\end{align*}
\end{proof}

We believe that coendomorphism objects do not always exist, but do so in very many interesting cases, that they are not always strong but they are in many interesting cases, and that they are not always $\MC$-strong but they are in many interesting cases. 

We'll discuss two cases of interest where the technical condition of $\MC$-strongness is met.

We first point out that if $X\in\B$ has a left dual, then we have an adjunction
\begin{equation*}
	\MC(X^*\triangleright M,N)\cong\MC(M,X\triangleright N)
\end{equation*}
for every $\B$-module category $\MC$, which is proved in exactly the same way as for $\MC=\B$.
\begin{Lem}\label{strongness lemma}
	Let $\B$ be braided monoidal, $\MC$ a $\B$-module category, and $\omega\colon\mathcal X\to\B$ a functor. Assume that
	\begin{enumerate}
		\item Every functor $\B\ni X\mapsto X\triangleright M\in\MC$ has a right adjoint $\MC\ni N\mapsto\hom(M,N)\in\B$.
		\item Every $\omega(X)$ has a left dual.
	\end{enumerate}
	Then every coendomorphism object of $\omega$ is $\MC$-strong.
\end{Lem}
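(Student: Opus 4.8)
The plan is to string the two hypotheses together with the defining universal property of $\coend(\omega)$ into a chain of natural bijections; since the module associators are strict I freely identify $(\omega(X)\ot\coend(\omega))\triangleright P$ with $\omega(X)\triangleright(\coend(\omega)\triangleright P)$, and I abbreviate $Q:=\hom(P,T)$. First I would dispose of the source of the comparison map. Hypothesis (1), applied with $X=\coend(\omega)$, $M=P$, $N=T$, gives $\MC(\coend(\omega)\triangleright P,T)\cong\B(\coend(\omega),Q)$, and the defining universal property of the coendomorphism object, taken at the test object $Q$, gives $\B(\coend(\omega),Q)\cong\Nat_X(\omega(X),\omega(X)\ot Q)$ via $g\mapsto(\omega\ot g)\delta$. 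Thus it remains to produce a natural bijection $\Nat_X(\omega(X),\omega(X)\ot Q)\cong\Nat_X(\omega(X)\triangleright P,\omega(X)\triangleright T)$.

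For this I would work at the level of the Hom-bifunctors whose ends are these two $\Nat$-sets. Using hypothesis (2), choose a left dual $\omega(X)^*$ for every object $X$; a pointwise choice already makes $X\mapsto\omega(X)^*$ a contravariant functor $\mathcal X\to\B$ via transpose morphisms, which is exactly the structure needed for the adjunction isomorphisms below to be natural in $X$. Now assemble, for $X_1,X_2\in\mathcal X$, the chain
$$\MC(\omega(X_1)\triangleright P,\omega(X_2)\triangleright T)\cong\MC\big((\omega(X_2)^*\ot\omega(X_1))\triangleright P,T\big)\cong\B(\omega(X_2)^*\ot\omega(X_1),Q)\cong\B(\omega(X_1),\omega(X_2)\ot Q),$$
using in turn the left-duality adjunction in $\MC$ recalled just before the lemma (together with strictness of the action), hypothesis (1), and the left-duality adjunction in $\B$. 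Every arrow is an adjunction isomorphism, hence natural in all of its arguments, so the composite is a natural isomorphism of bifunctors $\mathcal X^\op\times\mathcal X\to\mathbf{Set}$; restricting to the diagonal (equivalently, passing to ends) yields the desired bijection of $\Nat$-sets. The point worth stressing is that no \emph{natural} choice of duals is needed: a mere pointwise choice turns $\omega(-)^*$ into a functor, and naturality of adjunctions does the rest.

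Finally I would check that the total composite is the intended comparison map $f\mapsto(\omega\triangleright f)(\delta\triangleright P)$. This is a routine ``mates'' verification: writing $\tilde f\in\B(\coend(\omega),Q)$ for the transpose of $f$ under (1), characterized by $f=\ev\circ(\tilde f\triangleright P)$ with $\ev\colon Q\triangleright P\to T$ the counit of (1), one unwinds the three displayed isomorphisms to send $(\omega\ot\tilde f)\delta$ to $(\omega\triangleright\ev)\circ((\omega\ot\tilde f)\delta\triangleright P)$, which by bifunctoriality of $\triangleright$ and the defining relation for $\tilde f$ collapses to $(\omega\triangleright f)(\delta\triangleright P)$. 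I expect the only real obstacle to be this variance bookkeeping, together with confirming that the assembled bijection is the prescribed one; once the functoriality of $\omega(-)^*$ is in hand, each individual step is a standard adjunction and $\MC$-strongness follows.
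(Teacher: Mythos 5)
Your proof is correct and follows essentially the same route as the paper's: both chain together the adjunction of hypothesis (1), the dual-object adjunctions in $\B$ and in $\MC$, and the universal property of $\coend(\omega)$, the only difference being that the paper packages the middle step as the internal isomorphism $Y\ot\hom(M,N)\cong\hom(M,Y\triangleright N)$ (proved by the same Yoneda-style chain you run at the level of Hom-bifunctors) and then composes the $\Nat$-level bijections in the opposite direction. Your added care about functoriality of the pointwise duals and the mates verification that the composite is the canonical comparison map $f\mapsto(\omega\triangleright f)(\delta\triangleright P)$ is sound and merely makes explicit details the paper leaves implicit.
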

\begin{proof}
 We have $Y\ot\hom(M,N)\cong\hom(M,Y\triangleright N)$ since
	\begin{align*}
		\B(X,Y\ot(\hom(M,N)))
		&\cong \B(Y^*\ot X,\hom(M,N))\\
		&\cong \MC((Y^*\ot X)\triangleright M,N)\\
		&\cong \MC(Y^*\triangleright X\triangleright M,N)\\
		&\cong \MC(X\triangleright M,Y\triangleright N)\\
		&\cong \B(X,\hom(M,Y\triangleright N))
	\end{align*}
and therefore
	\begin{align*}
		\Nat(\omega\triangleright M,\omega\triangleright N)
		&\cong\Nat(\omega,\hom(M,\omega\triangleright N))\\
		&\cong\Nat(\omega,\omega\ot\hom(M,N))\\
		&\cong\B(\coend(\omega),\hom(M,N))\\
		&\cong\MC(\coend(\omega)\triangleright M,N)
	\end{align*}
\end{proof}

\begin{Expl}
	Let $\B$ be a braided finite tensor category. Then $\B$ has a coendomorphism object which is $\MC$-strong for every exact $\B$-module category.
\end{Expl}
In fact the coendomorphism object is well known and goes back to work of Lyubashenko \cite{MR1324034}. The right adjoint $\hom$ is known from \cite{MR2119143}
\begin{Expl}
	Let $\oH$ be a coquasitriangular Hopf algebra over a field $k$ and consider the category $\B=\RComod\oH$ of right $\oH$-comodules. The category $\B$ admits a coendomorphism object $H$ which is Majid's transmuted Hopf algebra
	It is strong and it is $\MC$-strong for any module category $\MC$ equivalent to the category of modules over a $k$-algebra $A$, provided the action functor $\triangleright$ is a $k$-bilinear functor exact in each argument.
\end{Expl}
Majid's transmutation appears first (for the dual case of quasitriangular Hopf algebras) in \cite{MR1129171}. We refer to \cite[Section 9.4]{MR1381692}, notably Example 9.4.10. We will need more details on the transmuted Hopf algebra $H$ associated to $\oH$ below, here we only discuss why it has the relevant properties. By definition $H$ is the coendomorphism object of (the identity functor on) $\B=\RComod\oH$. By the finiteness theorem for comodules it is also the coendomorphism object of the inclusion functor $\omega\colon\mathcal X\to\B$ where $\mathcal X$ is the category of finite-dimensional $\oH$-comodules. It is not hard to see that for each $M\in\MC$ the functor $\RComod H\to {\LMod A}$ given by $X\mapsto X\triangleright M$ has a right adjoint, and therefore $H$ is a strong coendomorphism object and $\MC$-strong as claimed. In fact it should be true that every exact functor $\RComod C\to\LMod A$ for a coalgebra $C$ and algebra $A$ has a right adjoint, but I could not pinpoint a reference for this.

Now let $\B$ be a braided monoidal category that admits a strong coendomorphism object $H$. By definition there is a functor $\B\to \B^H$ defined by the universal natural (in $X\in\B$) transformation $X\to X\ot H$. There is a unique structure of commutative central braided bialgebra on $H$ for which the functor has its image in $\B^{!H}$. We'll assume that $H$ is a Hopf algebra (which is the case for example if $H$ is Majid's transmutation of a coquasitriangular Hopf algebra, or if $H$ is the coendomorphism object of a finite tensor category). 
\begin{Thm}
	Let $\B$ be a braided monoidal category admitting a strong coendomorphism object $H$ which is a Hopf algebra. Let $\MC$ be a $\B$-module category for which $H$ is $\MC$-strong. Then
	\begin{equation*}
		\refc_\B(\MC)\cong {_H\MC}.
	\end{equation*}
\end{Thm}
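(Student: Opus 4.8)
The plan is to exhibit the equivalence directly, using the universal coaction to build a functor $F\colon{_H\MC}\to\refc_\B(\MC)$ and then using $\MC$-strongness to invert it. The guiding observation is that the functor $\B\to\B^{!H}$ endows \emph{every} object $X\in\B$ with a good-comodule structure, namely the universal coaction $\delta_X\colon X\to X\ot H$, and that by construction the braiding of $\B^{!H}$ on these comodules is nothing but the braiding of $\B$. Hence the formula \eqref{braidingfrommodule} of \Cref{modules to reflective center}, which for an action $\rho\colon H\triangleright M\to M$ reads $e_{X,M}=(X\triangleright\rho)(\delta_X\triangleright M)$, is defined for all $X\in\B$, and the computation \eqref{firstaxfrommodule} becomes the first axiom of \Cref{def:brm} for all $X,Y\in\B$, the braidings occurring there being those of $\B$. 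Since $\B\to\B^{!H}$ is monoidal it carries $I$ to the trivial comodule, so $\delta_I=\eta$ and therefore $e_{I,M}=\rho(\eta\triangleright M)=\id_M$; the inverse of $e_{\leer,M}$ is already provided by \Cref{modules to reflective center}. Thus $F(M)=(M,e_{\leer,M})$ is a well-defined object of $\refc_\B(\MC)$, and $F$ is a functor.

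For full faithfulness I would expand the condition that $f\colon M\to M'$ commute with the braidings, namely $(X\triangleright f)e_{X,M}=e_{X,M'}(X\triangleright f)$, using $e_{X,M}=(X\triangleright\rho)(\delta_X\triangleright M)$ together with naturality of $\delta$. This says precisely that the two natural transformations corresponding, under the $\MC$-strong bijection of \Cref{strongness lemma}, to $f\rho$ and to $\rho'(H\triangleright f)$ coincide. Injectivity of that bijection then makes $f$ commute with the braidings if and only if $f\rho=\rho'(H\triangleright f)$, that is, if and only if $f$ is $H$-linear.

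For essential surjectivity, take $(V,e_{\leer,V})\in\refc_\B(\MC)$. By $\MC$-strongness the natural transformation $e_{\leer,V}$ corresponds to a unique $\rho\colon H\triangleright V\to V$ with $e_{X,V}=(X\triangleright\rho)(\delta_X\triangleright V)$; once $\rho$ is known to be an action we obtain $F(V,\rho)=(V,e_{\leer,V})$ on the nose, so it remains only to verify the module axioms. The unit axiom $\rho(\eta\triangleright V)=\id_V$ is just the equation $e_{I,V}=\id_V$ read through $\delta_I=\eta$.

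Associativity $\rho(\mu\triangleright V)=\rho(H\triangleright\rho)$ is the main obstacle, and here I would invoke the earlier lemma on tensor products of coendomorphism objects: $H\ot H$ is the $\MC$-strong coendomorphism object of $\id\ot\id\colon\B\times\B\to\B$, its coaction built from $\delta$, $\delta$ and the braiding $c$ of $\B$. Morphisms $(H\ot H)\triangleright V\to V$ thus correspond bijectively to transformations $(X\ot Y)\triangleright V\to(X\ot Y)\triangleright V$ natural in $X,Y$. Tracing the correspondence, $\rho(\mu\triangleright V)$ is sent to $e_{X\ot Y,V}$, the multiplication $\mu$ entering through the tensor-product coaction exactly as in \eqref{firstaxfrommodule}, while $\rho(H\triangleright\rho)$ is sent to the right-hand side of the first diagram of \Cref{def:brm}, with all braidings those of $\B$. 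The hexagon satisfied by $(V,e_{\leer,V})$ asserts that these two transformations are equal, so injectivity of the two-variable bijection forces $\rho(\mu\triangleright V)=\rho(H\triangleright\rho)$. The delicate point is this identification of the two legs of the hexagon with $\rho(\mu\triangleright V)$ and $\rho(H\triangleright\rho)$, but it is exactly the computation \eqref{firstaxfrommodule} read backwards, so no new diagrammatic input is required.
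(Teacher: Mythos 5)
Your proposal is correct and follows essentially the same route as the paper: the module-braiding--to--action correspondence via the $\MC$-strongness bijection in the form \eqref{braidingfrommodule}, and associativity obtained by running \eqref{firstaxfrommodule} backwards, identifying the two legs of the first axiom of \Cref{def:brm} with the images of $\rho(\mu\triangleright V)$ and $\rho(H\triangleright\rho)$ under the two-variable bijection for the coendomorphism object $H\ot H$. Your additional details (the explicit full-faithfulness check on morphisms and the explicit appeal to the lemma on tensor products of coendomorphism objects) merely spell out what the paper's proof leaves implicit in the phrase ``this equality of two natural automorphisms of $(X\ot Y)\triangleright M$ shows the equality of two morphisms $H\ot H\ot M\to M$''.
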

\begin{proof}
	We have seen in \Cref{modules to reflective center} that $_H\MC$ is a braided $\B^{!H}$-module category, and so by pullback it is a braided $\B$-module category. By \Cref{univprop} this defines a functor $_H\MC\to\refc_\B(\MC)$. The functor is an equivalence, since by $\MC$-strongness, a natural in $X\in \B$ transformation
	$e_{XM}\colon X\triangleright M\to X\triangleright M$ is given by a morphism $\mu \colon H\triangleright M\to M$. In fact if we depict $\mu$ in the shape of a module structure (before proving it is one), the isomorphism
	\[\Nat_X(X\triangleright M,X\triangleright M)\cong \MC(H\triangleright M,M)\] is given by the same picture as \cref{braidingfrommodule}.

	To see that the morphism $\mu$ thus corresponding to a module braiding defines a module structure on $M$ we run \cref{firstaxfrommodule} backwards:
		\begin{equation*}
			\gbeg57
		\got 1X\gvac 1\got 1Y\gvac1\got 1M\gnl
		\grcm\grcm\gcl4\gnl
		\gcl4\gbr\gcl1\gnl
		\gvac 1\gcl3\gmu\gnl
		\gvac 2\gcn2123\gnl
		\gvac 3\glm\gnl
		\gob 1X\gob1Y\gvac2\gob1M
		\gend
		=
		\gbeg 55
		\gvac 1\got1{X\ot Y}\gvac 2\got 1M\gnl
		\gvac 1\grcm\gvac1\gcl1\gnl
		\gvac 1\gcl1\gcn2113\gcl1\gnl
		\gvac 1\gcl1\gvac1\glm\gnl
		\gvac 1\gob1{X\ot Y}\gvac 2\got1M\gend
			=
		\gbeg 46
		\got 1X\got1Y\gvac1\got1M\gnl
		\gcl1\grcm\gcl1\gnl
		\gbr\glm\gnl
		\gcl1\grcm\gcl1\gnl
		\gibr\glm\gnl
		\gob1X\gob1Y\gvac1\gob1M
		\gend
=
		\gbeg56
		\got 1X\gvac1\got1Y\gvac1\got1M\gnl
		\grcm\grcm\gcl1\gnl
		\gcl3\gbr\glm\gnl
		\gvac1\gcl2\gcn2113\gcl1\gnl
		\gvac 3\glm\gnl
		\gob1X\gob1Y\gvac2\gob1M
		\gend
		\end{equation*}
		This equality of two natural automorphisms of $(X\ot Y)\triangleright M$ shows the equality of two morphisms $H\ot H\ot M\to M$, namely the module associativity for $M$.

		This shows that every module braiding on $M\in\MC$ arises from a unique $H$-module structure on $M$.
\end{proof}
\begin{Rem}\label{isoremark}
	Note that in the proof the property of a module braiding to be an isomorphism wasn't actually used, only the two equations it has to satisfy. As a consequence, we have proved that under the hypotheses of the Theorem, every natural transformation satisfying the two equations for a module braiding is in fact a natural isomorphism, the inverse given by the antipode. 

	Of course the existence of an antipode is closely related to the existence of dual objects, and in fact one can prove that if a rigid braided monoidal category $\B$ acts on a category $\MC$, then every natural transformation satisfying the equations of a module braiding is necessarily an isomorphism; this is similar to the behavior of the Drinfeld center construction on a rigid monoidal category. In fact, let $V^*$ be a left dual of $V\in\B$ with evaluation $\ev\colon V^*\ot V\to I$ depicted $\gbeg21\gev\gend$ and coevaluation $\db\colon I\to V\ot V^*$ depicted $\gbeg21\gdb\gend$. We have (writing $\ol c=c\inv$)
	\begin{align*}
		\gbeg33
		\got1{V^*}\got1V\got1M\gnl
		\gev\gcl1\gnl
		\gvac2\gob1M\gend
		=
		\gbeg35
		\got1{V^*}\got1V\got1M\gnl
		\gbr\gcl3\gnl
		\gibr\gnl
		\gev\gnl
		\gvac2\gob1M\gend
		=
		\gbeg57
		\got3{V^*\ot V}\gvac1\got1M\gnl
		\gvac1\gbmp c\gvac2\gcl2\gnl
		\gcn4137\gnl
		\gvac3\grbr\gnl
		\gvac3\gbmp{\ol c}\gcl2\gnl
		\gvac3\gbmpt\ev\gnl
		\gvac4\gob1M\gend
		=
		\gbeg39
		\got1{V^*}\got1V\got1M\gnl
		\gbr\gcl2\gnl
		\gibr\gnl
		\gcl1\grbr\gnl
		\gbr\gcl1\gnl
		\gcl1\grbr\gnl
		\gibr\gcl2\gnl
		\gev\gnl
		\gvac2\gob1M
		\gend
		=
		\gbeg37
		\got1{V^*}\got1V\got1M\gnl
		\gcl1\grbr\gnl
		\gbr\gcl1\gnl
		\gcl1\grbr\gnl
		\gibr\gcl2\gnl
		\gev\gnl
		\gvac2\gob1M\gend,
	\end{align*}
	proving that
	\[
		\gbeg47
		\gvac2\got1V\got1M\gnl
		\gdb\gcl1\gcl2\gnl
		\gcl4\gbr\gnl
		\gvac1\gcl1\grbr\gnl
		\gvac1\gibr\gcl2\gnl
		\gvac1\gev\gnl
		\gob1V\gvac2\gob1M\gend
	\]
	is a left inverse for $e_{V,M}$. The proof that $e_{V,M}$ has a right inverse is similar.
\end{Rem}

An interesting class of $\B$-module categories is given by the categories $\B_A$ of right $A$-modules in $\B$, where $A$ is an algebra in $\B$. In fact, all exact module categories over a finite tensor category are of this form \cite[Theorem 3.17]{MR2119143}. We note that a strong coendomorphism object of $\B$ is also $\B_A$-strong 

For a $\B$-module category of the form $\B_A$ the following result describes, in particular, the reflective center in the same form. We prefix the result by an obvious general remark which is surely folklore.
\begin{Rem}
	Let $\C$ be a monoidal category, $A$ an algebra in $\C$ and $B$ an algebra in $\ctr(\C)$ with half-braiding $\sigma$. Then $A\ot_\sigma B=A\ot B$ is an algebra in $\C$ with multiplication 
	\begin{equation*}
	\gbeg 44
	\got 1A\got1B\got1A\got1B\gnl
	\gcl1\gbrc\gcl1\gnl
	\gmu\gmu\gnl
	\gob2A\gob2B\gend
\end{equation*}
Right $A\ot_\sigma B$-modules $M$ are the same as right $A$-modules and $B$-modules satisfying
\begin{equation}\label{rightmodulecomm}
	\gbeg35
	\got1M\got1B\got1A\gnl
	\grm\gcn121{-1}\gnl
	\gcl1\gnl
	\grm\gnl
	\gob1M\gend
	=
	\gbeg35
	\got1M\got1B\got1A\gnl
	\gcl1\gbrc\gnl
	\grm\gcn111{-1}\gnl
	\grm\gnl
	\gob1M\gend
\end{equation}
and algebra morphisms $A\ot_\sigma B\to R$ for an algebra $R$ are in bijection with pairs of algebra morphisms $f\colon A\to R$ and $g\colon B\to R$ satisfying
\begin{equation}\label{faux-coproduct}
	\gbeg25
	\got1B\got1A\gnl
	\gcl1\gcl1\gnl
	\gbmp g\gbmp f\gnl
	\gmu\gnl
	\gob2R\gend
	=
	\gbeg25
	\got1B\got1A\gnl
	\gbrc\gnl
	\gbmp f\gbmp g\gnl
	\gmu\gnl
	\gob2R\gend
\end{equation}
\end{Rem}
\begin{Cor}
	Let $A$ be an algebra in the braided monoidal category $\B$ and $H$ a strong coendomorphism object in $\B$.
	\begin{enumerate}
		\item Braidings making $\B_A$ a braided module category over $\B$ are parametrized by algebra maps $f\colon H\to A$ satisfying 
			\begin{equation}\label{centercond}
				\gbeg25
				\got1H\got1A\gnl
				\gcl1\gcl2\gnl
				\gbmp f\gnl
				\gmu\gnl
				\gob2A\gend
				=
				\gbeg25
				\got1H\got1A\gnl
				\gbrc\gnl
				\gcl1\gbmp f\gnl
				\gmu\gnl
				\gob2A\gend
			\end{equation}
		\item $\refc_\B(\B_A)\cong \B_{A\ot_{\sigma}H}$ where $\sigma$ is the half-braiding of the central braided bialgebra $H$.
	\end{enumerate}
\end{Cor}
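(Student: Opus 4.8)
The backbone of the argument is the preceding Theorem, which for $\MC=\B_A$ (where $H$ is $\B_A$-strong, as just noted) gives $\refc_\B(\B_A)\cong{_H(\B_A)}$, the category of left $H$-modules in $\B_A$. An object of ${_H(\B_A)}$ is a right $A$-module $M$ together with a left $H$-action $\lambda\colon H\ot M\to M$ that is a morphism in $\B_A$; since the $A$-action on $H\triangleright M=H\ot M$ sits on the $M$-factor, this last condition says precisely that the left $H$-action and the right $A$-action on $M$ commute. The whole Corollary is obtained by reading off this description in two complementary ways.

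For part (2) the plan is to identify ${_H(\B_A)}$ with $\B_{A\ot_\sigma H}$. By the preceding Remark, a right $A\ot_\sigma H$-module is a right $A$-module that is simultaneously a right $H$-module, subject to \eqref{rightmodulecomm}. Since the reflective center produces \emph{left} $H$-modules, I first convert the left action into a right one by means of the antipode and the half-braiding, of the form $\rho_H=\lambda\circ(S\ot M)\circ\sigma$ (with $\sigma$ the relevant component of the half-braiding of $H$); that this is a right $H$-action is the usual left/right correspondence for a braided Hopf algebra, carried out with the central half-braiding $\sigma$ in place of the ambient braiding and using the central coalgebra axioms \eqref{cctreq}. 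The essential point is then to verify that the ``$\lambda$ commutes with the $A$-action'' condition transforms, under this conversion, into exactly \eqref{rightmodulecomm}: the half-braiding occurring in \eqref{rightmodulecomm} is precisely the one introduced by the conversion. An inverse functor is built from $S^{-1}$ (equivalently $S$ together with the inverse half-braiding), and checking that the two assignments are mutually inverse completes the equivalence.

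For part (1) I would use the couniversal property of \Cref{univprop}: a braiding making $\B_A$ a braided $\B$-module category is the same as a module-functor section of the underlying functor $\refc_\B(\B_A)\to\B_A$, hence, via the Theorem, a left $H$-action $\lambda_V\colon H\ot V\to V$ on every $V\in\B_A$, natural in $V$ and $A$-linear. Such a family is a natural transformation $H\ot(-)\Rightarrow\id$ of endofunctors of $\B_A$; evaluating on the free module $A$ and using $\B_A(H\ot A,A)\cong\B(H,A)$ records it as a single morphism $f\colon H\to A$, from which $\lambda_V$ is recovered by naturality. The left $H$-module axioms for $\lambda_A$ force (after cancelling the free $A$) that $f$ be multiplicative and unital, i.e.\ an algebra map, while the requirement that the induced $\lambda_V$ be $A$-linear for \emph{all} $V$ --- equivalently that $f(H)$ be $\sigma$-central in $A$ --- is exactly \eqref{centercond}; conversely, every algebra map satisfying \eqref{centercond} yields a braiding.

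The main obstacle I expect is the bookkeeping in part (2): placing the antipode $S$ and the half-braiding $\sigma$ so that the left-module/right-$A$ commutation becomes \eqref{rightmodulecomm} \emph{verbatim}, since this interleaves the Hopf antipode relations with the central braided coalgebra axioms \eqref{cctreq} and is sensitive to handedness. In part (1) the delicate step is arguing that $A$-linearity of the action on all of $\B_A$, rather than merely on the regular module, is equivalent to the centrality condition \eqref{centercond}; this is where the strong coendomorphism structure and naturality must be invoked in place of a naive element-wise computation.
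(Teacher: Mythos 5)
Your part (2) contains a genuine gap, located exactly where you flagged ``the main obstacle'': the left-to-right conversion. You insert the antipode, setting $\rho=\lambda\circ(S\ot M)\circ\sigma^{-1}$, and appeal to ``the usual left/right correspondence for a braided Hopf algebra, carried out with the central half-braiding $\sigma$ in place of the ambient braiding''. But that substitution is not legitimate: $H$ is an ordinary Hopf algebra in $\B$, so the anti-multiplicativity of $S$ is an identity relative to the ambient braiding $c$, namely $S\mu=\mu(S\ot S)c_{H,H}$, whereas your conversion flips strands with $\sigma$. Tracing the right-action axiom for your $\rho$ through naturality and the hexagon for $\sigma$, it reduces to the identity $\mu(S\ot S)\sigma_{H,H}^{-1}=\mu(S\ot S)c_{H,H}$, which is not among the axioms of a commutative central braided Hopf algebra (only $\mu$ is a morphism in the center, and \eqref{cctreq} concerns the comultiplication, not $S$); it would follow if $S$ were itself a morphism in the center, but that is nowhere available. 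In addition, your inverse functor requires $S^{-1}$, and invertibility of the antipode is not assumed (nor does the classical argument $S^2=\id$ for commutative Hopf algebras transfer, since the commutativity here is with respect to $\sigma$ while the Hopf structure lives over $c$). The tell-tale sign that the route is off is that your argument never uses the braided commutativity of $H$ --- the one hypothesis that powers the paper's correspondence. The paper converts without any antipode: since $\mu$ is commutative with respect to $\sigma$, the map $\rho:=\lambda\circ\sigma_{H,M}^{-1}$ is already a right $H$-action, it is invertible because half-braidings are isomorphisms, and a single naturality-plus-hexagon computation (precomposing the right-hand side of \eqref{rightmodulecomm} with $\sigma_{H,M}$) shows the plain $(H,A)$-bimodule commutation becomes \eqref{rightmodulecomm} verbatim.

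Your part (1) is essentially the paper's argument: both routes go through the couniversal property of \Cref{univprop} and reduce a braiding to an algebra map $f\colon H\to A$ with the $\sigma$-centrality condition \eqref{centercond}; the paper phrases this as module-functor liftings of the identity along $\B_{A\ot_\sigma H}\to\B_A$, parametrized via \eqref{faux-coproduct} by algebra maps $A\ot_\sigma H\to A$ extending $\id_A$. One caveat on your mechanism: in an abstract $\B$, objects of $\B_A$ are not colimits of copies of the regular module, so ``evaluating on the free module $A$ and recovering $\lambda_V$ by naturality'' is not enough as stated; what actually determines $\lambda_V$ is the $\B$-module-functor constraint (fixing the action on all free modules $X\triangleright A$) together with naturality along the split-epi action morphism $V\ot A\to V$. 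Since the paper is comparably terse at this point, I would count this as a gloss to repair rather than a gap, in contrast to the conversion issue in part (2).
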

\begin{proof}
	We already know $\refc_{\B}(\B_A)\cong{_H\B_A}$. Since $H$ is a commutative algebra in the center, left modules correspond to right modules via the braiding, and the bimodule associativity turns into \cref{rightmodulecomm} under the correspondence because composing the right hand side with $\sigma_{HM}$ gives
	\begin{equation*}
			\gbeg36
	\got1H\got1M\got1A\gnl
	\gbrc\gcl1\gnl
	\gcl1\gbrc\gnl
	\grm\gcn111{-1}\gnl
	\glm\gnl
	\gob3M\gend
	=
	\gbeg35
	\got1H\got1M\got1A\gnl
	\gcl1\grm\gnl
	\gbrc\gnl
	\grm\gnl
	\gob1M\gend
	\end{equation*}
	This deals with the second assertion. For the first, we know by \cref{univprop} that braidings on $\B_A$ are parametrized by $\B$-module functor liftings of the underlying functor $\B_{A\ot_\sigma H}\to\B_A$. These liftings in turn are parametrized by algebra maps $A\ot_\sigma H\to A$ extending the identity on $A$, which leads to \cref{centercond} as a special case of \cref{faux-coproduct}.
\end{proof}

\section{Bosonizations}

A Hopf algebra $H$ in the braided monoidal category of $\oH$-modules over a quasitriangular Hopf algebra $\oH$  can be ``bosonized'' to give an ordinary Hopf algebra. In fact one can view $H$ as a Hopf algebra in the category of Yetter-Drinfeld modules over $\oH$, and then the bosonization is the Radford biproduct, as an algebra simply the smash product.

A similar procedure, analogous to the passage from the reflective center in a special situation via Doi-Hopf modules to the reflective algebra in \cite{laugwitzwaltonyakimov}, will lead us from modules over an internal Hopf algebra $H$ in a category acting on the category of modules over a $k$-algebra $A$ to a ``bosonized'' version of $H$.

\begin{Lem}\label{bosoring}
	Let $A$ be a $k$-algebra and assume that the category $\LMod A$ of left $A$-modules is a $\C$-category over a monoidal category $\C$ such that the action bifunctor is right exact in its right argument. If $R$ is an algebra in $\C$ then $R[A]:=R\triangleright A$ has an $A$-ring structure such that $\LMod{R[A]}\cong{_R\left(\LMod A\right)}$.
\end{Lem}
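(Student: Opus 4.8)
The plan is to present both sides as the Eilenberg--Moore category of one and the same monad on $\LMod A$, and then to identify that monad with tensoring by $R[A]$. Set $T:=R\triangleright(-)\colon\LMod A\to\LMod A$. The algebra structure of $R$ together with the module-category coherences makes $T$ a monad: its multiplication is $R\triangleright(R\triangleright M)\cong(R\ot R)\triangleright M\to R\triangleright M$ induced by the product of $R$, and its unit is $M\cong I\triangleright M\to R\triangleright M$ induced by the unit of $R$. Unravelling definitions, an $R$-module structure on $M\in\LMod A$ is exactly a $T$-algebra structure, and the morphisms coincide, so ${}_R(\LMod A)=(\LMod A)^T$. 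It therefore suffices to equip $R[A]=T(A)$ with an $A$-ring structure for which $T$ is isomorphic, as a monad, to $R[A]\ou A(-)$; passing to Eilenberg--Moore categories will then give ${}_R(\LMod A)\cong\LMod{R[A]}$.

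First I would construct the Eilenberg--Watts comparison. For $M\in\LMod A$ and $m\in M$ write $\hat m\colon A\to M$ for the left $A$-linear map $a\mapsto am$. Since $T$ is additive, $(\xi,m)\mapsto T(\hat m)(\xi)$ is $k$-bilinear, and using the identity $\widehat{am}=\hat m\circ r_a$ (with $r_a$ right multiplication by $a$) together with $T(\widehat{am})=T(\hat m)\circ T(r_a)$ one checks that it descends to a natural transformation $\Phi_M\colon R[A]\ou A M\to T(M)$, provided $R[A]=T(A)$ carries the right $A$-action $\xi\cdot a:=T(r_a)(\xi)$. This right action commutes with the ambient left action by functoriality of $T$, making $R[A]$ an $A$-bimodule, and on $M=A$ the map $\Phi_A$ is the canonical isomorphism $R[A]\ou A A\cong R[A]$ because $\hat 1=\id_A$. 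To see that $\Phi_M$ is an isomorphism for all $M$, I would use that both $R[A]\ou A(-)$ and $T$ are right exact and choose a free presentation $A^{(J)}\to A^{(I)}\to M\to 0$: applying the two functors and invoking the five lemma reduces invertibility of $\Phi_M$ to invertibility of $\Phi$ on free modules, which in turn follows from $\Phi_A$ being an isomorphism.

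It remains to record the algebra. Transporting the monad structure of $T$ across the natural isomorphism $\Phi$ turns $R[A]\ou A(-)$ into a monad, and a monad of this shape on $\LMod A$ is exactly an $A$-ring structure on $R[A]$: its unit is the monad unit $\eta_A\colon A=I\triangleright A\xrightarrow{u\triangleright A}R\triangleright A$, and its multiplication is $\mu_A\circ\Phi_{R[A]}\colon R[A]\ou A R[A]\to T^2(A)\to R[A]$. For this $A$-ring one has $\LMod{R[A]}=(\LMod A)^{R[A]\ou A(-)}$ by definition, and since $\Phi$ is an isomorphism of monads it induces an isomorphism of Eilenberg--Moore categories ${}_R(\LMod A)=(\LMod A)^T\cong(\LMod A)^{R[A]\ou A(-)}=\LMod{R[A]}$, as claimed.

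The main obstacle is precisely the invertibility of $\Phi$, that is, the Eilenberg--Watts identification $T\cong R[A]\ou A(-)$; this is where the hypothesis that $\triangleright$ be right exact in its right argument is used, guaranteeing that $T$ is right exact so that a free presentation of $M$ controls $T(M)$. The delicate point is the passage from finitely presented $M$ to arbitrary $M$: the free modules $A^{(I)}$ occurring in a presentation may be infinite coproducts of copies of $A$, and one must know that $T=R\triangleright(-)$ preserves them in order to identify $T(A^{(I)})$ with $R[A]^{(I)}$. All remaining verifications --- naturality of $\Phi$, the bimodule axioms on $R[A]$, and the compatibility of $\Phi$ with the units and multiplications of the two monads --- are routine diagram chases from functoriality of $T$ and the monad axioms, and I would relegate them to short checks.
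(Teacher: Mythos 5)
Your proof is correct and is essentially the paper's own argument: the paper simply cites Watts' theorem to get $R\triangleright M\cong R[A]\ou AM$ and then transports the multiplication via $R[A]\ou AR[A]\cong R\triangleright(R\triangleright A)\cong(R\ot R)\triangleright A\xrightarrow{\nabla\triangleright A}R\triangleright A$, identifying $R$-module structures in $\LMod A$ with $R[A]$-module structures, which is exactly your monadic/Eilenberg--Moore formulation with the Eilenberg--Watts comparison $\Phi$ spelled out. The delicate point you flag about $T$ preserving infinite coproducts applies equally to the paper's appeal to Watts' theorem (whose standard statement requires preservation of arbitrary direct sums in addition to right exactness), so it is a caveat about the lemma's stated hypothesis rather than a defect specific to your write-up.
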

\begin{proof}
	By Watts' Theorem 
	$R\triangleright M\cong R[A]\ou A M$. 
	$R[A]$ is an $A$-ring with 
	\begin{equation*}R[A]\ou AR[A]=R\triangleright(R\triangleright A)\cong (R\ot R)\triangleright A\xrightarrow{\nabla\triangleright A}R\triangleright A\end{equation*}
	$A$-module maps $R\rightarrow M\to M$ (and among them the module structures) are the same as $A$-module maps $R[A]\ou AM\to M$ (and among them the module structures).
\end{proof}

We will consider a more concrete situation yet. Namely, let $\oH$ be a $k$-Hopf algebra. When $A$ is a right $\oH$-module algebra, then $\RComod \oH$ acts on $\LMod A$ by setting $V\triangleright M=V\ot M$ with the $A$-module structure $a(v\ot m)=v\sw 0\ot (a.v\sw 1)m$ for $a\in A,v\in V, m\in M$.

We note that any $\RComod \oH$-module category structure on $\LMod A$ for which the underlying functor to the category of vector spaces is a strict module functor can be recovered in this way from a module algebra structure on $A$. Namely, the $\oH$-action on $A$ is given in terms of the $A$-module structure on $\oH\triangleright A=\oH\ot A$ by $a.h=(\epsilon\ot A)(a(h\triangleright 1)).$ We note that by \cite{MR2331768}, if $\oH$ is finite dimensional, every exact $\RComod\oH$-module category is of this form for a suitable algebra $A$.

We also already note how module braidings are described in this situation. Namely, any natural isomorphism $e\colon V\triangleright M\to V\triangleright M$ is given by a convolution invertible linear map $K\colon H\to A$ in the form 
\begin{equation*}
	e_{VM}(v\ot m)=v\sw 0\ot K(v\sw 1)m
\end{equation*}
Of course the axioms for $e$ to be a module braiding correspond to equations for $K$.

To see what condition  ensures that the components of $e$ are $A$-module maps, we compare
\begin{align*}
	ae(v\ot m)&=a(v\sw 0\ot K(v\sw 1)m=v\sw 0\ot (a.v\sw 1)K(v\sw 2)m\\
		e(a(v\ot m))&=e(v\sw 0\ot (a.v\sw 1)m=v\sw 0\ot K(v\sw 1)(a.v\sw 2)m.
		\end{align*}
		
To find the remaining axioms we substitute the expression for $e$ in terms of $K$ into the defining equations of a module braiding:
\begin{align*}
	(V\triangleright e_{WM})&(c_{WV}\triangleright M)(W\triangleright e_{VM})(c_{VW}\triangleright M)(v\ot w\ot m)\\
	&=(V\triangleright e_{WM})(c_{WV}\triangleright M)(W\triangleright e_{VM})(w\sw 0\ot v\sw 0\ot r(v\sw 1|w\sw 1)m)\\
	&=(V\triangleright e_{WM})(c_{WV}\triangleright M)(w\sw 0\ot v\sw 0\ot K(v\sw 1)r(v\sw 2|w\sw 1)m\\
		&=(V\triangleright e_{WM})(v\sw 0\ot w\sw 0\ot r(w\sw 1|v\sw 1)K(v\sw 2)r(v\sw 3|w\sw 2)m\\
		&=v\sw 0\ot w\sw 0\ot K(w\sw 1)r(w\sw 2|v\sw 1)K(v\sw 2)r(v\sw 3|w\sw 3)m
	\end{align*}
	
	wants to equal
	\begin{align*}
		e_{V\ot W,M}(v\ot w\ot m)=v\sw 0\ot w\sw 0\ot K(v\sw 1w\sw 1)m
	\end{align*}
	while
	\begin{align*}
		(c_{WV}\triangleright M)&(W\triangleright e_{VM})(c_{VW}\triangleright M)(v\ot w\ot m)\\
		&=(c_{WV}\triangleright M)(W\triangleright e_{VM}(w\sw 0\ot v\sw 0\ot r(v\sw 1|w\sw 1)m)\\
		&=(c_{WV}\triangleright M)(w\sw 0 v\sw 0\ot K(v\sw 1)r(v\sw 2\ot w\sw 2)m\\
		&=v\sw 0\ot w\sw 0\ot r(w\sw 1|v\sw 1)K(v\sw 2)r(v\sw 3|w\sw 2)
	\end{align*}
	
	wants to equal
	\begin{align*}	e_{V,W\triangleright M}(v\ot w\ot m)=v\sw 0\ot K(v\sw 1)(w\ot m)=v\sw 0\ot w\sw 0\ot K(v\sw 1)w\sw 1m
	\end{align*}
	
	for all $V,W\in\RComod\oH$, $M\in \LMod A$, $v\in V$, $w\in W$ and $m\in M$. Thus
	
	\begin{DefLem}
		Let $\oH$ be a coquasitriangular Hopf algebra and $A$ a right $\oH$-module algebra. Then module braidings on the $\RComod \oH$-module category $\LMod A$ correspond to \emph{reflective structures} on $A$, which are defined to be convolution invertible maps $K\colon \oH\to A$ satisfying
		\begin{align*}
			K(h\sw 1)(a.h\sw 2)&=(a.h\sw 1)K(h\sw 2),\\
			K(gh)&=K(h\sw 1)r(h\sw 2|g\sw 1)K(g\sw 2)r(g\sw 3|h\sw 3),\\			K(g).h&=r(h\sw 1|g\sw 1)K(g\sw 2)r(g\sw 3|h\sw 2).
		\end{align*}
	\end{DefLem}
	\begin{Rem}
		The map $K$ is an analog of the notion of a universal $K$-matrix introduced in \cite{MR3905136}; there specific comodule algebras rather than module algebras are considered. So $K$ could well be baptized a ``universal $K$-map''.

		Using \Cref{isoremark} the condition that $K$ be convolution invertible is actually redundant.
	\end{Rem}
	An algebra $R$ in  $\RComod \oH$ is an $\oH$-comodule algebra, and the algebra $R[A]=R\# A$ from \Cref{bosoring} is a generalized smash product, i.e. $R\# A=R\ot A$ as right module, $a(r\#x)=r\sw 0\ot (a.r\sw 1)x$ and multiplication determined by the $A$-ring condition and $(r\# 1)(s\# 1)=rs\# 1$.

\newcommand\centralhit{\leftharpoonup}
The algebra $R$ we are interested in is the transmutation $H$ of a coquasitriangular $k$-Hopf algebra $\oH$ with its dual R-matrix $r\colon \oH\ot\oH\to k$. Applying \Cref{bosoring} to $H$ we will obtain below a slight generalization (our $\oH$ need not be finite dimensional) of the construction of the reflective algebra in \cite{laugwitzwaltonyakimov}, with a suitable universal $K$-map replacing their universal $K$-matrix. To construct it explicitly we need to recall the details of Majid's transmutation of $\oH$ with all its structures. 

The braiding $c$ on the category $\RComod\oH$ is given by
\begin{equation*}
	c_{VW}(v\ot w)=w\sw 0\ot v\sw 0r(v\sw 1|w\sw 1)	.
\end{equation*}
The structure of $H$ as a Hopf algebra in $\RComod\oH$ can be found in \cite{MR1381692}. As a coalgebra $H=\oH$, and the $H$-comodule structure on an object of $\RComod\oH$ is identical to the $\oH$-comodule structure. As an $\oH$-comodule, $H=\oH^{\coad}$, that is, the $\oH$-comodule structure on $H$ is 
\begin{equation*}
	\delta\colon H\to H\ot\oH;h\mapsto h\swe0\ot h\swe1:=h\sw 2\ot S(h\sw 1)h\sw 3
\end{equation*}
Multiplication in $H$ is given by  $g\bullet h=g\sw 2h\sw 3r(g\sw 3|S(h\sw 1))r(g\sw 1|h\sw 2)$.  Finally the half-braiding making $H$ a central braided bialgebra is
determined in \cite{NeuSch:RBCNCB} to be
\begin{equation*}
	\sigma\colon H\ot V\to V\ot H;h\ot v\mapsto v\sw 0\ot h\sw 2r(v\sw 1|h\sw 1)r(h\sw 3|v\sw 2)
\end{equation*}
We rewrite it in the form $\sigma(h\ot v)=v\sw 0\ot h\centralhit v\sw 1$ with the action of $\oH$ on $H$ defined by
\begin{equation*}
	H\ot\oH\to H;h\ot g\mapsto h\centralhit g:=h\sw 2r(g\sw 1|h\sw 1)r(h\sw 2|g\sw 2).
\end{equation*}
which therefore, together with the coadjoint coaction, makes $H$ a Yetter-Drinfeld module.
\begin{Prop}
	Let $\oH$ be a coquasitriangular $k$-Hopf algebra over the field $k$, and $A$ right $\oH$-module algebra. Then $\refc_{(\RComod \oH)}(\LMod A)\cong\LMod{A[H]}$. Explicitly $A[H]=\oH\# A$ with multiplication 
	\begin{equation*}
		(g\# x)(h\# y)=g\bullet h\swe0\# (x.h\swe1)y.
	\end{equation*}
	The $\RComod \oH$-module category structure on $\LMod{A[H]}$ is given by the right $\oH$-module algebra structure
	\begin{equation*}
		(g\# x).h=g\centralhit h\sw 1\# x.h\sw 2.
	\end{equation*}
	The reflective structure $K\colon\oH\to A[H]$ is given by $K(h)=h\#1$.
\end{Prop}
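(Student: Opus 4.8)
The plan is to assemble the statement from the two principal results already established. The acting category is $\B=\RComod\oH$, and by the Example above its transmutation $H$ is a strong coendomorphism object which is moreover $\MC$-strong for $\MC=\LMod A$, since the action $V\triangleright M=V\ot M$ is $k$-bilinear and exact in each argument; $H$ is also a Hopf algebra. Hence the Theorem applies and gives $\refc_{\RComod\oH}(\LMod A)\cong{_H(\LMod A)}$. As the same action is right exact in its right argument, \Cref{bosoring} applies to the algebra $R=H$ in $\B$ and yields ${_H(\LMod A)}\cong\LMod{H[A]}$ with $H[A]=H\triangleright A=\oH\ot A$, which we write as $A[H]=\oH\#A$. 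Composing the two equivalences proves the first assertion.

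Next I would read off the multiplication from the generalized smash product recipe recalled just before the statement. Writing $\iota(g)=g\#1$ and $\eta(x)=1\#x$, one has $g\#x=\iota(g)\eta(x)$; moreover $\eta$ is an algebra map, $\iota(g)\iota(h)=\iota(g\bullet h)$ records the transmuted product $\bullet$, and the left action reads $\eta(a)(h\#x)=h\swe0\#(a.h\swe1)x$ with the coadjoint coaction $h\swe0\ot h\swe1=h\sw2\ot S(h\sw1)h\sw3$. From $\eta(x)\iota(h)=h\swe0\#(x.h\swe1)$ together with $\iota(g)\iota(h\swe0)=\iota(g\bullet h\swe0)$ one then obtains $(g\#x)(h\#y)=g\bullet h\swe0\#(x.h\swe1)y$, as claimed.

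For the module-category structure I would transport the $\RComod\oH$-action from ${_H(\LMod A)}$, where by \Cref{modovercentralalg} and \Cref{modules to reflective center} it is built from the half-braiding $\sigma(h\ot v)=v\sw0\ot(h\centralhit v\sw1)$: the object $V\triangleright M=V\ot M$ carries the $A$-action $x(v\ot m)=v\sw0\ot(x.v\sw1)m$ and the $H$-action $h(v\ot m)=v\sw0\ot(h\centralhit v\sw1)m$. Acting by $g\#x=\iota(g)\eta(x)$ and invoking coassociativity of the coaction of $V$ yields $(g\#x)(v\ot m)=v\sw0\ot\bigl((g\centralhit v\sw1)\#(x.v\sw2)\bigr)m$. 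Since the underlying functor to vector spaces is a strict module functor, this structure is recovered from an $\oH$-module algebra structure on $A[H]$, and comparison with the universal recipe $b(v\ot m)=v\sw0\ot(b.v\sw1)m$ reads off $(g\#x).h=(g\centralhit h\sw1)\#(x.h\sw2)$.

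Finally, the canonical braided structure on $\LMod{A[H]}\cong\refc_{\RComod\oH}(\LMod A)$ corresponds, by the Definition and Lemma above applied to the $\oH$-module algebra $A[H]$, to a reflective structure $K\colon\oH\to A[H]$ through $e_{V,M}(v\ot m)=v\sw0\ot K(v\sw1)m$. Under the equivalence this module braiding is the one of \cref{braidingfrommodule}, namely the $H$-action on $M$, which for an $A[H]$-module is the action of $\iota(v\sw1)=v\sw1\#1$; hence $K(h)=h\#1$. The conceptual content is thus immediate from the earlier results, and I expect the only genuine labour — and the main place to be careful — to lie in the transport step: one must confirm that the transported action is strictly compatible with the forgetful functor, so that it really arises from an $\oH$-module algebra structure, and keep the iterated coactions, the coadjoint coaction, and the $\centralhit$-action aligned; checking directly that the resulting formula obeys the module-algebra axioms is then a routine manipulation of the dual $R$-matrix $r$.
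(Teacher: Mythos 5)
Your proposal is correct and follows essentially the same route as the paper: invoke the Theorem (with the Example supplying strongness and $\MC$-strongness of the transmutation $H$) to get $\refc_{\RComod\oH}(\LMod A)\cong{_H(\LMod A)}$, apply \Cref{bosoring} together with the generalized smash product description to identify this with $\LMod{A[H]}$, and read off the multiplication, the $\oH$-module algebra structure, and $K(h)=h\#1$ from the $H$-action. The only cosmetic difference is that you extract the module algebra structure by comparison with the universal formula $b(v\ot m)=v\sw0\ot(b.v\sw1)m$, whereas the paper evaluates $(\epsilon\ot\oH\ot A)$ on the action at $h\ot 1\#1$ --- the same computation.
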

\begin{proof}
	We already know that the multiplication is the right one. The $\RComod\oH$ module structure on $\LMod{A[H]}={_H\left(\LMod A\right)}$ is the one described in \cref{modovercentralalg}. That is to say, for $M\in\LMod{A[H]}$ and $V\in\RComod\oH$ we have $V\triangleright M=V\ot M$ with the $A[H]$-module structure given by the previous $A$-module structure and 
	\begin{equation*}
		h.(v\ot m)=v\sw 0\ot (h\centralhit v\sw 1)m.
	\end{equation*}
	so that 
	\begin{align*}
		(h\# x)(v\ot m)&=h.(v\sw 0\ot(x.v\sw 1)m)\\
		&=v\sw 0\ot (h\centralhit v\sw 1)(x.v\sw 1)m
        \end{align*}
	In particular, the $A[H]$-module structure on $H\triangleright A[H]$ is given by
	\begin{align*}
		(g\# x)(h\ot j\# y)=h\sw1\ot (g\centralhit h\sw2\#x)(j\# y)
	\end{align*}			
	Thus it corresponds to the module algebra structure
	\begin{align*}
		(g\# x).h&=(\epsilon\ot \oH\ot A)( (g\ot x).(h\ot 1\# 1))\\
		&=(\epsilon\ot\oH\ot A)(h\sw 1\ot g\centralhit h\sw 2\# x.h\sw 3\\
			&=g\centralhit h\sw 1\# x.h\sw 2.
	\end{align*}
	The module braiding on $\LMod{A[H]}$ is given by
	\begin{align*}
		e_{VM}(v\ot m)=v\sw 0\ot v\sw1m
	\end{align*}
	which implies the form of $K$.
\end{proof}
\section{quasi-appendix}
We close with a sketch of how the preceding construction should generalize, without \emph{substantial} additional pains, to the case of a coquasitriangular coquasi-Hopf algebra. We will not actually carry this through to the point of presenting an explicit result. We think it worthwhile, however, to discuss how the construction above is sufficiently well set up to take care of much of the technical difficulties that usually make explicit calculations with quasi-Hopf algebras or their dual counterparts rather painful. In fact we will argue our point without even needing to reference the explicit axioms of the objects involved, evoking their conceptual meaning will suffice.

By  a coquasitriangular coquasi-Hopf algebra we mean a $k$-coalgebra $\oH$ equipped with a multiplication admitting a unit element, and a convolution invertible form $\phi\colon \oH^{\ot 3}\to k$ such that the category $\RComod\oH$ is monoidal with the tensor product induced by multiplication, the unit object corresponding to the unit element, and the associativity constraint
\[U\ot (V\ot W)\ni u\ot v\ot w\mapsto u\sw 0\ot v\sw 0\ot w\sw 0\phi(u\sw 1|v\sw 1|w\sw 1)\in (U\ot V)\ot W.\]
Further $\oH$ should be equipped with a coalgebra anti-automorphism and two maps $\alpha,\beta\colon \oH\to k$ such that the category $\RComodf\oH$ of finite dimensional comodules is rigid in the following way: For $V\in\RComodf\oH$ the dual space $V^*$ with the comodule structure determined by $\langle\kappa\sw 0|v\rangle\kappa\sw 1=\langle\kappa|v\sw 0\rangle S(v\sw 1)$ for $v\in V$ and $\kappa\in V^*$ is a dual object of $V$ in $\RComodf\oH$ with evaluation and coevaluation
\begin{gather*}
	\ev\colon V^*\ot V\ni\kappa\ot v\mapsto \langle\kappa|v\sw 0\rangle\alpha(v\sw 1)\in k\\
	\db\colon k\to (v_i)\sw 0\beta((v_i)\sw 0)v^i\in V\ot V^*.
\end{gather*}
Finally $\oH$ carries a convolution invertible $r\colon \oH\ot\oH\to k$ such that the same formula used above for an ordinary Hopf algebra equips $\RComod \oH$ with a braiding.

Of course there are equations involving multiplication, $\phi$, $S$, $\alpha$, $\beta$, and $r$ equivalent to the statement that $\RComodf\oH$ is a rigid braided monoidal category in the indicated fashion, but we will not work to a degree of detail making their knowledge necessary.

Conceptually, it is clear how transmutation theory for a coquasitriangular coquasi-Hopf algebra should work. The case of a quasitriangular quasi-Hopf algebra was considered in detail in \cite{2009arXiv0903.3959K} (except for a lack of interest in the central braided Hopf algebra structure). We do not know if the dual situation, which is better suited for the infinite dimensional case, has been looked at anywhere. For the sequel we conclude that $\RComod \oH$ contains a strong coendomorphism object $H$ which is a commutative central braided Hopf algebra.

To describe a suitable action of $\RComod\oH$ on the category of modules $\LMod A$ over an algebra $A$ we first note that given a map $A\ot\oH\to A$ denoted $a\ot h\mapsto a.h$ and satisfying $(ab).h=(a.h\sw 1)(b.h\sw 2)$ as well as $1.h=\epsilon(h)1$, we can define a functor $\triangleright\colon\RComod\oH\times\LMod A\to\LMod A$ by endowing $V\triangleright M$ for $V\in\RComod\oH$ and $M\in\LMod A$ with an $A$-module structure exactly as above in the case of an ordinary Hopf algebra. To turn $\triangleright$ into a module category structure we need to describe the module associator. Now any natural transformation $\Psi\colon V\triangleright W\triangleright M\to(V\ot W)\triangleright M$
in $V,W\in \RComod \oH$ and $M\in\LMod A$ has the form $\Psi(v\ot w\ot m)= v\sw 0\ot w\sw 0\ot \psi(v\sw 1|w\sw 1)m$ for some map $\psi\colon \oH\ot\oH\to A$. It is not a problem (only complicated) to translate the conditions that $\Psi$ be an $A$-module map and fulfil the coherence condition for a module category into axioms for $\psi$. Collecting these we would arrive at the suitable definition of an $\oH$-module algebra structure for $A$ (note this is not \emph{that} straightforward since $\oH$ is not associative). Similar notions with $\oH$ but not $A$ dualized are well-studied, see \cite{MR3929714} and its bibliography. 

Likewise, one can study module braidings on the module category associated to such a module algebra $A$. They would be parametrized like before by some linear map $K\colon \oH\to A$. Again, it would not be a problem, but again complicated, to extract the relevant conditions on $K$. One has to run the tentative definition of the module braiding (which does not change with respect to the ordinary Hopf case) through the axioms in \cref{def:brm}, this time inserting module associators for all the necessary shifting of parentheses that is implicit in the pictures.

As for the construction of the reflective algebra $A[H]$ describing the reflective center it would work exactly the same, except that the result would be peppered generously with instances of $\phi,\psi,\alpha$ and $\beta$. Notably, in spite of all the appearances of explicit nontrivial associators, there would be no need to prove that the obtained multiplication on $A[H]\cong  \oH\# A$ is in fact associative and fulfils the axioms of a module algebra. As for associativity, it is taken care of by the fact that the use of Watts' theorem to define the algebra structure of  $H\triangleright A$ automatically turns the ostensibly nontrivial associativities into the simple associativity of the tensor product of bimodules over a ring. The principle of this trick and some applications were extensively discussed in \cite{Sch:AMCGHSP}. The only work to be done is reading off the multiplication, module structure, and reflective structure of $A[H]$ already knowing that they are present, much like done above in the ordinary Hopf case.
\bibliographystyle{alpha}
\bibliography{eigene,andere,arxiv,mathscinet}

\def\cprime{$'$} \def\cprime{$'$} \def\germ{\mathfrak}\def\cprime{$'$}
  \def\cfgrv#1{\ifmmode\setbox7\hbox{$\accent"5E#1$}\else
  \setbox7\hbox{\accent"5E#1}\penalty 10000\relax\fi\raise 1\ht7
  \hbox{\lower1.05ex\hbox to 1\wd7{\hss\accent"12\hss}}\penalty 10000
  \hskip-1\wd7\penalty 10000\box7} \def\cprime{$'$}
  \def\cfgrv#1{\ifmmode\setbox7\hbox{$\accent"5E#1$}\else
  \setbox7\hbox{\accent"5E#1}\penalty 10000\relax\fi\raise 1\ht7
  \hbox{\lower1.05ex\hbox to 1\wd7{\hss\accent"12\hss}}\penalty 10000
  \hskip-1\wd7\penalty 10000\box7} \def\cprime{$'$}
  \def\cfgrv#1{\ifmmode\setbox7\hbox{$\accent}}
\begin{thebibliography}{BCPVO19}

\bibitem[AM07]{MR2331768}
Nicol\'as Andruskiewitsch and Juan~Mart\'in Mombelli.
\newblock On module categories over finite-dimensional {H}opf algebras.
\newblock {\em J. Algebra}, 314(1):383--418, 2007.

\bibitem[BCPVO19]{MR3929714}
Daniel Bulacu, Stefaan Caenepeel, Florin Panaite, and Freddy Van~Oystaeyen.
\newblock {\em Quasi-{H}opf algebras}, volume 171 of {\em Encyclopedia of
  Mathematics and its Applications}.
\newblock Cambridge University Press, Cambridge, 2019.
\newblock A categorical approach.

\bibitem[BK19]{MR3905136}
Martina Balagovi\'c and Stefan Kolb.
\newblock Universal {K}-matrix for quantum symmetric pairs.
\newblock {\em J. Reine Angew. Math.}, 747:299--353, 2019.

\bibitem[EGNO15]{MR3242743}
Pavel Etingof, Shlomo Gelaki, Dmitri Nikshych, and Victor Ostrik.
\newblock {\em Tensor categories}, volume 205 of {\em Mathematical Surveys and
  Monographs}.
\newblock American Mathematical Society, Providence, RI, 2015.

\bibitem[EO04]{MR2119143}
Pavel Etingof and Viktor Ostrik.
\newblock Finite tensor categories.
\newblock {\em Mosc. Math. J.}, 4(3):627--654, 782--783, 2004.

\bibitem[{Kli}09]{2009arXiv0903.3959K}
J~{Klim}.
\newblock {Transmutation and Bosonisation of Quasi-Hopf Algebras}.
\newblock {\em ArXiv e-prints}, March 2009.

\bibitem[LWY23]{laugwitzwaltonyakimov}
Robert Laugwitz, Chelsea Walton, and Milen Yakimov.
\newblock Reflective centers of module categories and quantum {K}-matrices,
  2023.

\bibitem[Lyu95]{MR1324034}
V.~Lyubashenko.
\newblock Modular transformations for tensor categories.
\newblock {\em J. Pure Appl. Algebra}, 98(3):279--327, 1995.

\bibitem[Maj91]{MR1129171}
Shahn Majid.
\newblock Braided groups and algebraic quantum field theories.
\newblock {\em Lett. Math. Phys.}, 22(3):167--175, 1991.

\bibitem[Maj93]{MR1188817}
Shahn Majid.
\newblock Transmutation theory and rank for quantum braided groups.
\newblock {\em Math. Proc. Cambridge Philos. Soc.}, 113(1):45--70, 1993.

\bibitem[Maj95]{MR1381692}
Shahn Majid.
\newblock {\em Foundations of quantum group theory}.
\newblock Cambridge University Press, Cambridge, 1995.

\bibitem[NS98]{NeuSch:RBCNCB}
Martin Neuchl and Peter Schauenburg.
\newblock Reconstruction in braided categories and a notion of commutative
  bialgebra.
\newblock {\em J. Pure Appl. Algebra}, 124(1-3):241--259, 1998.

\bibitem[Sch03]{Sch:AMCGHSP}
Peter Schauenburg.
\newblock Actions of monoidal categories and generalized {H}opf smash products.
\newblock {\em J. Algebra}, 270(2):521--563, 2003.

\bibitem[Sch07]{Sch:CBHA}
Peter Schauenburg.
\newblock Central braided {H}opf algebras.
\newblock In Louis~H. Kauffman, David~E. Radford, and Fernando J.~O. Souza,
  editors, {\em {H}opf algebras and generalizations}, volume 441 of {\em
  Contemp. Math.}, pages 117--147. AMS, 2007.

\end{thebibliography}

\end{document}